\newcommand{\R}{\mathbb{R}}
\DeclareMathOperator{\co}{co}
\renewcommand{\geq}{\geqslant}
\renewcommand{\leq}{\leqslant}
\newcommand{\norm}[1]{\left\Vert#1\right\Vert}
\newcommand{\Lip}{{\mathrm{Lip}}_0}
\newcommand{\justLip}{{\mathrm{Lip}}}
\newcommand{\cco}{\overline{\operatorname{co}}}
\newcommand{\ext}[1]{\operatorname{ext}\left(#1\right)}
\newtheorem{theorem}{Theorem}[section]
\newtheorem{lemma}[theorem]{Lemma}
\newtheorem{proposition}[theorem]{Proposition}
\newtheorem{corollary}[theorem]{Corollary}
\theoremstyle{definition}
\newtheorem{definition}[theorem]{Definition}
\newtheorem{example}[theorem]{Example}
\theoremstyle{remark}
\newtheorem{remark}[theorem]{Remark}
\numberwithin{equation}{section}
\newcommand{\abs}[1]{\left\lvert#1\right\rvert}
\def\fnote#1{\footnote}
\def\ignora#1{}
\def\n3#1{\left\vert  \! \left\vert \! \left\vert \, #1 \, \right\vert \!
  \right\vert \! \right\vert }
\newcommand{\iten}{\ensuremath{\widehat{\otimes}_\varepsilon}}
\newcommand{\pten}{\ensuremath{\widehat{\otimes}_\pi}}
\renewcommand{\leq}{\le}
\let\emptyset\varnothing
\newcommand{\N}{\mathbb{N}}
\newcommand{\U}{\mathcal{U}}
\newcommand{\eps}{\varepsilon}
\newcommand{\conv}{\text{co}}
\begin{document}

\author{ Esteban Martínez Vañó }\address{Universidad de Granada, Facultad de Ciencias. Departamento de An\'{a}lisis Matem\'{a}tico, 18071-Granada
(Spain)} \email{ emv@ugr.es}
\urladdr{\url{https://eleccionobarbarie.com}}

\author{ Abraham Rueda Zoca }\address{Universidad de Granada, Facultad de Ciencias. Departamento de An\'{a}lisis Matem\'{a}tico, 18071-Granada
(Spain)} \email{ abrahamrueda@ugr.es}
\urladdr{\url{https://arzenglish.wordpress.com}}

\subjclass[2020]{46B04; 46B08; 46B20; 46M07}

\keywords{Strong diameter two property; Ultraproducts; Daugavet property}

\title{The uniform strong diameter two property}

\begin{abstract}
We study a uniform version of the strong diameter two property. In particular, we find a characterisation that does not involve ultrafilters and we use it to provide some examples of spaces with this uniform property that do not follow from previously known results.
\end{abstract}

\maketitle

\section{Introduction}

The study of ultrapower techniques has attracted the attention of researches in many branches of functional analysis because of their usefulness in order to determine the local structure of metric and Banach spaces. As a matter of fact, observe that in \cite[Theorem 7.6]{beli00} ultraproduct techniques are used in order to establish a connection between the existence of (metric) retractions and the existence of (linear) projections on Banach spaces. Another use of ultraproduct techniques can be seen in \cite[Chapter 11]{alka2006}, where ultraproduct techniques are used in order to characterise the finite-representability of $\ell_1$ in a Banach space in terms of a condition on the type of the underlying space. A more recent application can be found in \cite{rz25-2}, where ultraproduct spaces where used in order to provide an explicit proof of the fact that $X\pten Y$ and $X\iten Y$ are superreflexive if, and only if, either $X$ or $Y$ is finite dimensional.

Due to its importance, a big effort has been devoted to describe the topological and geometrical structure of ultrapowers of Banach spaces. We refer the reader, for instance, to \cite{greraj23,hein80,james,stern78,Tu}, where a study of properties like being an $L_p$-space, the reflexivity of ultrapowers or the weak compactness of sets are studied. For geometric properties of Banach spaces in ultrapower spaces, we refer the reader to \cite{bksw,ggr22,hardtke18,kw04,rz25,talponen17} for study on properties like the extremal structure of the unit ball of ultrapower spaces, almost squareness, the Daugavet property or the slice diameter two property. 

Having a closer look to the above geometric properties of Banach spaces, one discovers that the complexity of its analysis in ultrapower spaces dramatically varies depending on whether or not the property involves the use of continuous linear functionals. This variation relies on a classical result about ultrapower spaces: given a Banach space $X$ and a free ultrafilter $\mathcal U$ over $\mathbb N$, it follows that $(X_\mathcal U)^*=(X^*)_\mathcal U$ if, and only if, $X$ is superreflexive. Moreover, if $X$ is not superreflexive, there is not a good description of the topological dual of $X_\mathcal U$. Because of this reason, informally speaking, properties of Banach spaces which are described using elements of the topological dual may be difficult to analyse in ultrapower spaces. This is the reason why, until \cite{rz25}, little was known about the diameter two properties in ultrapowers. Before this paper, ultrapower Banach spaces enjoying the diameter two properties were known by studying the inheritance by ultrapower spaces of certain properties implying the diameter two properties (this is the case, for instance, of the Daugavet property \cite{bksw} or almost squareness properties \cite{hardtke18}). However, in the above mentioned paper \cite{rz25} a complete characterisation of the slice-D2P in ultrapower spaces was obtained making use of the following result of Y. Ivakhno \cite[Lemma 1]{iva06}: a Banach space $X$ has the slice-D2P if, and only if, $B_X=\overline{\conv}\left\{\frac{x+y}{2}: x,y\in B_X, \Vert x-y\Vert\geq 2-\varepsilon\right\}$ holds for every $\varepsilon>0$. Thanks to the above result, which is a characterisation of the slice-D2P that avoids the access to the topological dual $X^*$, in \cite[Definition 3.5]{rz25} the \textit{uniform slice diameter two property (uniform slice-\emph{D2P})} is defined as a Banach space property on $X$ which determines the slice-D2P in all its ultrapowers $X_\mathcal U$ over any free ultrafilter $\mathcal U$ over $\mathbb N$. This characterisation permitted to prove in \cite[Section 3]{rz25} that all the classical Banach spaces which are known to enjoy the slice-D2P actually satisfy its uniform version. However, in \cite[Section 4]{rz25} it is proved that the slice-D2P is not equivalent to its uniform version. Indeed, based on an example of \cite{kw04}, where it is proved that the Daugavet property does not imply its uniform version, it is proved that for every $\varepsilon>0$, there exists a Banach space with the Daugavet property (and in particular with the slice-D2P) such that, for every free ultrafilter $\mathcal U$ over $\mathbb N$, the unit ball of $X_\mathcal U$ contains a slice of diameter smaller than $\varepsilon$ \cite[Theorem 4.6]{rz25}. 

The aim of the present note is to study a uniform version of the SD2P analogous to the cited ones for the Daugavet property or the slice-D2P. After introducing some notation and background on ultraproducts in Section \ref{sec:background}, our first step is, inspired by the strategy of \cite{rz25}, to find a characterisation of the SD2P which does not require to access the topological dual of the given space. We accomplish this in Theorem \ref{carsd2p} and use this result to obtain the desired characterisation in Theorem~\ref{ultrasd2p} together with further consequences (Corollary~\ref{charUSD2P}). The rest of Section \ref{sec:USD2P} is devoted to the study of some stability properties, and to present some examples which either follow from known characterisations of uniform versions of stronger properties than the SD2P or from known results on stability of ultrapowers. Finally, in Section \ref{sec:examples} we present a couple of examples which, unlike examples in Section \ref{sec:USD2P}, do not follow from known results and make a strong use of the characterisation given in Corollary~\ref{charUSD2P}.

\section{Background and notation}\label{sec:background}

We will consider real Banach spaces unless we state the contrary in particular examples. 

Given a Banach space $X$, $B_X$ (respectively $S_X$) stands for the closed unit ball (respectively the unit sphere) of $X$. We will denote by $X^*$ the topological dual of $X$. Given a subset $C$ of $X$, we will denote by $\conv(C)$ the convex hull of $C$. We also denote by $\conv_m(C)$ the set of all convex combinations of at most $m$ elements of $C$, that is
$$\conv_m(C):=\left\{\sum_{i=1}^m \lambda_i x_i: \lambda_1,\ldots, \lambda_m\in [0,1], \sum_{i=1}^m \lambda_i=1, x_1,\ldots, x_m\in C \right\}.$$

If $C$ is a bounded subset of a Banach space $X$, by a \textit{slice} of $C$ we will mean a set of the following form
$$S(C,f,\alpha):=\{x\in C:  f(x)>\sup f(C)-\alpha\}$$
where $f\in X^*$ and $\alpha>0$. Notice that a slice is nothing but the non-empty intersection of a half-space with the bounded (and not necessarily convex) set $C$.

Let us also introduce the relevant geometric properties we will discuss along the text:
\begin{definition}
Given a Banach space $X$, we say that
\begin{enumerate}
    \item $X$ has the \textit{slice diameter two property \emph{(}slice-\emph{D2P)}} if every slice of its unit ball has diameter exactly 2.
    \item $X$ has the \textit{diameter two property \emph{(D2P)}} if every non-empty relatively weakly open subset of its unit ball has diameter exactly 2.
    \item $X$ has the \textit{strong diameter two property \emph{(SD2P)}} if every convex combination of slices of $B_X$ has diameter 2.
\end{enumerate}
\end{definition}
It is a consequence of a lemma of Bourgain that the SD2P implies the D2P and its is obvious that the D2P implies the slice-D2P. Furthermore, it is also know that all of them are different properties (see, for example, \cite{blr15big} and\cite{blr15}). We refer the reader to \cite{ahltt16,almt21,blr15,lanru2020} for background about these properties. A relevant class of Banach spaces satisfying the SD2P are Banach spaces with the Daugavet property (see \cite[Theorem 3.2.1]{kmrzw25}). Observe that Banach spaces $X$ where all its ultrapower spaces enjoy the Daugavet property has been characterised (c.f. e.g. \cite[Theorem 7.6.4]{kmrzw25})

In the search of Theorem~\ref{ultrasd2p}, a reformulation of the SD2P given in \cite{lmr} is essential. We leave such result here for easy reference.

\begin{theorem}\label{theo:charSD2P2019}{\emph{(}\cite[Theorem 3.1]{lmr}\emph{)}}
Let $X$ be a Banach space. The following assertions are equivalent:
\begin{enumerate}
    \item $X$ has the \emph{SD2P}.
    \item Given any convex combination of slices $C$ of $B_X$ and given any $\varepsilon>0$ there exists an $x\in C$ with $\Vert x\Vert>1-\varepsilon$.
\end{enumerate}
\end{theorem}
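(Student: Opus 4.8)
Here the statement is attributed to \cite[Theorem 3.1]{lmr}, so let me just sketch how I would argue it. The implication $(1)\Rightarrow(2)$ is immediate, and I would dispose of it in one line: given a convex combination of slices $C$ of $B_X$ and $\varepsilon>0$, the \emph{SD2P} supplies $x,y\in C$ with $\norm{x-y}>2-\varepsilon$, and then $\norm{x}+\norm{y}\geq\norm{x-y}>2-\varepsilon$ together with $\norm{x},\norm{y}\leq 1$ forces $\norm{x}>1-\varepsilon$. All the content is therefore in $(2)\Rightarrow(1)$. For that direction, fix a convex combination of slices $C=\sum_{i=1}^n\lambda_i S(B_X,f_i,\alpha_i)$ of $B_X$, where we may assume $f_i\in S_{X^*}$ and $\lambda_i>0$ for all $i$, and set $\lambda_0:=\min_i\lambda_i$. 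Given $\varepsilon\in(0,1)$, the key idea is \emph{not} to feed hypothesis $(2)$ with $C$ but with the auxiliary convex combination of $2n$ slices
\[
D:=\sum_{i=1}^n\frac{\lambda_i}{2}\,S(B_X,f_i,\alpha_i)+\sum_{i=1}^n\frac{\lambda_i}{2}\,S(B_X,-f_i,\alpha_i),
\]
which is a legitimate convex combination of slices since $\norm{-f_i}=1$ and the weights add up to $1$.

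Next I would apply $(2)$ to $D$ and a small parameter $\delta>0$ (to be fixed at the end), obtaining $w=\sum_{i=1}^n\frac{\lambda_i}{2}(u_i+v_i)\in D$ with $\norm{w}>1-\delta$, where $u_i\in S(B_X,f_i,\alpha_i)$ and $v_i\in S(B_X,-f_i,\alpha_i)$. Thus $f_i(u_i)>1-\alpha_i$ and $f_i(-v_i)=-f_i(v_i)>1-\alpha_i$, so that \emph{both} $u_i$ and $-v_i$ lie in $S(B_X,f_i,\alpha_i)$.

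Then I would pick $\phi\in S_{X^*}$ with $\phi(w)>1-\delta$ and observe that, since $\phi(u_i),\phi(v_i)\leq 1$ for all $i$, the elementary identity
\[
\sum_{i=1}^n\frac{\lambda_i}{2}\big[(1-\phi(u_i))+(1-\phi(v_i))\big]=1-\phi(w)<\delta
\]
together with the nonnegativity of each summand forces $\phi(u_i),\phi(v_i)>1-2\delta/\lambda_0$ for every $i$. Consequently $x:=\sum_{i=1}^n\lambda_i u_i$ and $y:=\sum_{i=1}^n\lambda_i(-v_i)$ both belong to $C$, while
\[
\norm{x-y}\geq\phi(x)-\phi(y)=\sum_{i=1}^n\lambda_i\big(\phi(u_i)+\phi(v_i)\big)>2-\frac{4\delta}{\lambda_0}.
\]
Choosing $\delta<\lambda_0\varepsilon/4$ gives $\diam C>2-\varepsilon$; since $\varepsilon$ was arbitrary and $C\subseteq B_X$, this yields $\diam C=2$, and as $C$ ranged over all convex combinations of slices, $X$ has the \emph{SD2P}.

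The one genuinely non-routine step — everything else being the bookkeeping above — is the choice of the auxiliary family $D$: throwing in the slices relative to $-f_i$ is exactly what makes the single functional $\phi$ (obtained essentially for free, as an almost norming functional of an almost-norm-one point of a convex combination of slices) \emph{simultaneously} almost maximised on each $S(B_X,f_i,\alpha_i)$, through the $u_i$, and almost minimised on it, through the $-v_i$; and that is precisely what produces two points of $C$ at distance nearly $2$. I would expect the proof in \cite{lmr} to follow this scheme.
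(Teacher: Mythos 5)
The paper does not prove this statement; it is imported verbatim from \cite[Theorem 3.1]{lmr}, so there is no internal proof to compare against. Your argument is correct --- the $(1)\Rightarrow(2)$ direction is the obvious one, and in $(2)\Rightarrow(1)$ the doubling trick of adjoining the slices $S(B_X,-f_i,\alpha_i)$ so that a single almost-norming functional $\phi$ of $w$ is simultaneously almost maximised at the $u_i$ and at the $-v_i$ is exactly the mechanism of the original proof in \cite{lmr}.
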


We will make use of the above theorem in order to provide a description of the SD2P in ultrapower spaces. In order to do so, the following notation will be useful: given a Banach space $X$ we define
$$C_m^{n, \eps, \alpha}(X) := \co_m\left( \left\{z \in \ell_\infty^n(X): \norm{z}_\infty \leq \alpha \wedge \dfrac{1}{n}\norm{\sum_{i=1}^n z_i} > 1-\eps \right\} \right).$$
If $m = 1$ or $\alpha = 1$ we will omit the indices and if $\alpha = 1+\eps$ we will denote the previous set by $C_m^{n, \eps+}(X)$. Furthermore, we will also use the following notation along the text
$$D_m^{n, \eps,\alpha}(X) := \sup_{z \in B_{\ell_\infty^n}(X)} d\left( z, C_m^{n, \eps,\alpha} (X) \right)$$
with the same agreements for the indices of the sets.

We will also use in several parts of the text the notion of \textit{almost isometric ideal}: given $Y$ a subspace of a Banach space $X$, we say that $Y$ is an \emph{almost isometric ideal} (ai-ideal) in $X$ if $Y$ is locally complemented in $X$ by almost isometries.
This means that for each $\varepsilon>0$ and for each
finite-dimensional subspace $E\subseteq X$ there exists a linear
operator $T:E\to Y$ satisfying
\begin{enumerate}
\item\label{item:ai-1}
  $T(e)=e$ for each $e\in E\cap Y$, and
\item\label{item:ai-2}
  $(1-\varepsilon) \Vert e \Vert \leq \Vert T(e)\Vert\leq
  (1+\varepsilon) \Vert e \Vert$
  for each $e\in E$,
\end{enumerate}
i.e. $T$ is a $(1+\varepsilon)$-isometry fixing the elements of $E$.
If the $T$'s satisfy only (\ref{item:ai-1}) and the right-hand side of (\ref{item:ai-2}) we get the well-known
concept of $Y$ being an \emph{ideal} in $X$ \cite{GKS}.

Note that the Principle of Local Reflexivity means that $X$ is an ai-ideal in $X^{**}$ for every Banach space $X$. Moreover, the Daugavet property and all of the diameter two properties are inherited by ai-ideals (see \cite{aln2}).

Finally, we will briefly introduce ultraproduct spaces. Given a family of Banach spaces $\{X_\lambda: \lambda \in \Lambda\}$ we denote 
$$\ell_\infty(\Lambda,X_\lambda):=\left\{f \in \prod \limits_{\lambda\in \Lambda} X_\lambda: \sup_{\lambda \in \Lambda}\Vert f(\lambda)\Vert<\infty\right\},$$
and for any free ultrafilter $\mathcal U$ over $\Lambda$ we can consider the following closed subspace of $\ell_\infty(\Lambda, X_\lambda)$ 
$$c_{0,\mathcal U}(\Lambda,X_\lambda):= \left\{f\in \ell_\infty(\Lambda, X_\lambda): \lim_\mathcal U \Vert f(\lambda)\Vert=0 \right\}.$$
The \textit{ultraproduct of $\{X_\lambda: \lambda \in \Lambda\}$ with respect to $\mathcal U$} is the quotient Banach space
$$(X_\lambda)_\mathcal U:=\ell_\infty(\Lambda,X_\lambda)/c_{0,\mathcal U}(\Lambda,X_\lambda).$$
If every $X_\lambda$ is equal to some fix Banach space $X$ we will call the ultraproduct of that family the \textit{ultrapower of $X$}, which will be denoted by $X_\U$.

We denote the image of $x \in \ell_\infty(\Lambda,X_\lambda)$ by some $\lambda \in \Lambda$ as $x_\lambda$ and by $[x_\lambda]_\mathcal U$, or simply $[x_\lambda]$ if there is no confusion, the coset $x + c_{0,\mathcal U}(\Lambda, X_\lambda) \in (X_\lambda)_\U$. Furthermore, from the definition of the quotient norm, it is not difficult to prove that
$$\Vert [x_\lambda]\Vert_\U=\lim_\mathcal U \Vert x_\lambda\Vert$$
for every $[x_\lambda] \in (X_\lambda)_\mathcal U$.  This implies that the canonical inclusion $j:X\rightarrow X_\mathcal U$ given by
$$j(x):=[x]_\mathcal U$$
is an into linear isometry. It is well known (c.f. e.g. Propositions 6.1 and 6.2 in \cite{hein80}) that $X_\mathcal U$ is finitely representable in $j(X)$. However, from an inspection of the proofs and the operators used there it can be concluded that $j(X)$ is indeed an almost isometric ideal in $X_\mathcal U$.

Let us finish wit a useful remark for our future purposes.

\begin{remark}\label{remark:elereprebolaunidad}
Let $(X_\lambda)_\mathcal U$ be a ultraproduct space for some free ultrafilter $\mathcal U$ over an infinite set $\Lambda$. Observe that, given $x\in B_{(X_\lambda)_\mathcal U}$, we can always choose $x_\lambda\in B_{X_\lambda}$ such that $x=[x_\lambda]_\mathcal U$, that is, we can always select representatives where every element is in the unit ball. Indeed, if we select some representatives $x=[y_\lambda]_\mathcal U$ we have two possibilities:
\begin{enumerate}
    \item If $\Vert x\Vert_\mathcal U=\lim\limits_\mathcal U \Vert y_\lambda\Vert<1$, then define
    $$A:=\{\lambda\in\Lambda: \vert \Vert y_\lambda\Vert-\Vert x\Vert_\mathcal U\vert <1-\Vert x\Vert_\mathcal U\}\in\mathcal U.$$
    Observe that given $\lambda\in A$ it follows that $\Vert y_\lambda\Vert<1$, so if we define $x_\lambda=y_\lambda$ if $\lambda\in A$ and $x_\lambda=0$ if $\lambda\notin A$, then $x=[x_\lambda]_\mathcal U$ is the desired representation.
    \item If $\Vert x\Vert_\mathcal U=\lim\limits_\mathcal U \Vert y_\lambda\Vert=1$, then define
    $$A:=\{\lambda\in\Lambda: \vert \Vert y_\lambda\Vert-\Vert x\Vert_\mathcal U\vert <1\}\in\mathcal U.$$
    Observe that given $\lambda\in A$ it follows that $\Vert y_\lambda\Vert>0$, so if we define $x_\lambda=\frac{y_\lambda}{\Vert y_\lambda\Vert}$ if $\lambda\in A$ and $x_\lambda=0$ if $\lambda\notin A$, then $x=[x_\lambda]_\mathcal U$ is the desired representation.
\end{enumerate}
\end{remark}

\section{The uniform strong diameter two property}\label{sec:USD2P}

As stated in the introduction our objective is to study the following uniformization of the strong diameter two property:

\begin{definition}
Let $X$ be a Banach space. We say that $X$ has the \textit{uniform strong diameter two property} (USD2P) if for any free ultrafilter $\U$ over $\N$ the ultrapower $X_\U$ has the SD2P.
\end{definition}

Observe that if $X$ has the USD2P, then $X$ also has the SD2P because $X$ is an almost isometric ideal in any of its ultrapowers and \cite[Proposition 3.3]{aln2} asserts that the SD2P is preserved by ai-ideals. In fact, it is a stronger property because in \cite[Theorem 4.6]{rz25} it is proved that for any $\eps > 0$ there exists a Banach space with the Daugavet property (so with the SD2P) such that, for any free ultrafilter $\U$ over $\N$, the unit ball of $X_\U$ contains slices of diameter less than $\eps$.

Despite that, we can see that classical Banach spaces which are known to enjoy the SD2P actually have the uniform version.

\begin{example}
ASQ spaces have the USD2P by \cite[Corollary 2.2]{blr14}, \cite[Proposition 2.5]{all16} and \cite[Proposition 4.2]{hardtke18}. In particular, non-reflexive M-embedded spaces have the USD2P \cite[Corollary 4.3]{all16}.
\end{example}

In the next example we aim to prove that for $L_1$ spaces the USD2P and the SD2P are equivalent, but to do so we need a little bit of preliminaries.

Recall that given a measure space $(\Omega, \Sigma,\mu)$, $A \in \Sigma$ is called an \emph{atom} for $\mu$ if $\mu(A)>0$ and given any measurable set $B$, either $\mu(A \cap B) = 0$ or $\mu(A \setminus B) = 0$. Moreover, if $\mu$ is finite we can apply \cite[Theorem~2.1]{johnson} to decompose $\mu$ in a unique way as the sum of two finite measures $\nu$ and $\eta$ such that $\nu$ is atomless, $\eta$ is purely atomic and they are mutually singular (mutual singularity and mutual $S$-singularity are clearly equivalent on finite measures). This also allows the decomposition of $L_1(\mu)$ as 
\begin{equation}\label{eq:Maharam}
    L_1(\mu) \equiv L_1(\nu) \oplus_1 L_1(\eta) \equiv L_1(\nu)\oplus_1 \ell_1(I),
\end{equation} 
where $I$ is the set of all atoms for $\eta$ (up to a measure zero set).

Lastly, we will also need the fact that every $L_1(\mu)$ space can be decomposed as
\begin{equation}\label{eq:decompfinite}
    L_1(\mu) \equiv \ell_1(\Lambda,L_1(\mu_\lambda)),
\end{equation} 
where each $\mu_\lambda$ is finite (c.f. e.g. \cite[Remark P. 501]{deflo}). Furthermore, an inspection of the proof allows us to conclude that $\mu$ is atomless iff every $\mu_\lambda$ is also atomless.

\begin{example}
Let $X = L_1(\mu)$. By (\ref{eq:decompfinite}) and the stability of the SD2P by $\ell_1$-sums, $X$ has the SD2P iff each $L_1(\mu_\lambda)$ has it which, by (\ref{eq:Maharam}) and \cite[Theorem 3.4.4]{kmrzw25}, it happens iff each $\mu_\lambda$ is atomless, that is, iff $\mu$ is atomless. 

So, if $X$ has the SD2P, each $\mu_\lambda$ has no atoms and by \cite[Corollary 2.5]{rodrz23} $L_1(\mu_\lambda)$ must be a \textit{weakly almost square Banach space}, so a \textit{locally almost square Banach space} (see \cite{all16} for definitions). By \cite[Proposition 5.3]{all16} $X$ must also be a LASQ space.

Now, given a free ultrafilter $\U$ over $\N$, it follows that $X_\mathcal U$ is LASQ by \cite[Proposition 4.2]{hardtke18} and, in particular, $X_\mathcal U$ has the slice-D2P by \cite[Proposition 2.5]{k14}. On the other hand, we can obtain by \cite[Theorem 3.3]{hein80} that $X_\mathcal U$ is isometrically isomorphic to an $L_1$ space and, by the same argument as in the first paragraph, we obtain that its measure must be atomless. By \cite[Theorem 3.4.4]{kmrzw25} we conclude that $X_\U$ has the Daugavet property, so it has the SD2P.

We have then proved that if $X$ has the SD2P, then $X_\U$ also has it for every free ultrafilter $\U$ over $\N$, that is, we have proved that $X$ has the USD2P too.
\end{example}

\begin{example}
If $X$ is an $L_1$ predual, then $X$ has the SD2P iff $X$ it is infinite dimensional (for instance, observe that $X^{**}$ is an infinite dimensional $C(K)$ space \cite[Theorem 6 \S 21 and Theorem 6 \S 11]{elton}, so it satisfies the SD2P \cite[Corollary 3.5]{abg} and so does $X$ being an ai-ideal of $X^{**}$). As any ultrapower of an $L_1$ predual is also an $L_1$ predual (see \cite[Proposition 2.1]{hein81}), we also obtain that in this case the SD2P and its uniform version are equivalent.
\end{example}

\begin{example} If $X$ is an infinite-dimensional uniform algebra, then $X$ has the USD2P. This follows since $X_\mathcal U$ is a uniform algebra \cite[Proof of Theorem 7.6.14 c)]{kmrzw25} for any free ultrafilter $\U$ of $\N$ and because infinite-dimensional uniform algebras have the SD2P \cite[Theorem 4.2]{aln}.
\end{example}

Observe that in all these examples we are using that, either stronger properties than the SD2P have well known characterisations for its uniform versions, or that a class of Banach spaces is known to be stable under ultraproducts. To obtain more examples which, a priori, cannot be proved by the previous techniques, we will need a practical characterisation of the USD2P which does not involve ultrapowers. 

\subsection{Characterisation of spaces with the USD2P}

We will first introduce a geometrical characterisation of the SD2P which, as it does not require the access to its topological dual, is of intependent interest.

\begin{theorem}\label{carsd2p}
Let $X$ be a Banach space. The following are equivalent:
\begin{enumerate}
    \item $X$ has the \emph{SD2P}.
    \item For every $n \in \N$ and $\eps > 0$
    $$B_{\ell_\infty^n(X)} = \cco \left( C^{n, \eps}(X) \right).$$
    \item For every $n \in \N$ and $\eps > 0$
    $$B_{\ell_\infty^n(X)} \subset \cco \left( C^{n, \eps+}(X) \right).$$
\end{enumerate}
\end{theorem}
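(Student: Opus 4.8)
The plan is to exploit the reformulation of the SD2P in Theorem~\ref{theo:charSD2P2019}, which says that the SD2P is equivalent to the assertion that every convex combination of slices of $B_X$ contains points of norm arbitrarily close to $1$. The key observation linking this to the sets $C^{n,\eps}(X)$ is that a convex combination $\sum_{i=1}^n \frac1n S_i$ of slices $S_i = S(B_X, f_i, \alpha_i)$ of $B_X$ ``lives inside'' $\ell_\infty^n(X)$ in the following sense: a tuple $z = (z_1,\dots,z_n) \in B_{\ell_\infty^n(X)}$ with $\frac1n\norm{\sum z_i}$ close to $1$ forces, after testing against a suitable functional of the form $(f_1,\dots,f_n)/n$ (acting as $z \mapsto \frac1n\sum f_i(z_i)$), each coordinate $z_i$ to lie in a slice of $B_X$ determined by $f_i$; conversely any choice of $z_i$ in prescribed slices $S_i$ produces such a tuple. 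So $C^{n,\eps}(X)$ is essentially the union over all choices of slices of the ``$\ell_\infty^n$-lifts'' of the convex combinations $\frac1n\sum S_i$, and the barycentre map $z \mapsto \frac1n\sum z_i$ carries $\cco(C^{n,\eps}(X))$ onto (a set containing) $\cco$ of all convex combinations of slices.

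Concretely, I would prove the cycle (1)$\Rightarrow$(2)$\Rightarrow$(3)$\Rightarrow$(1). For (1)$\Rightarrow$(2): the inclusion $\cco(C^{n,\eps}(X)) \subseteq B_{\ell_\infty^n(X)}$ is trivial since $C^{n,\eps}(X)$ consists of tuples of norm $\le 1$. For the reverse inclusion, fix $z \in B_{\ell_\infty^n(X)}$; by a separation argument it suffices to show that for every $\Phi \in (\ell_\infty^n(X))^* = \ell_1^n(X^*)$, say $\Phi = (f_1,\dots,f_n)$ acting by $\Phi(w) = \sum f_i(w_i)$, one has $\Phi(z) \le \sup \Phi(C^{n,\eps}(X))$ — actually one wants $\sup\Phi$ over $B_{\ell_\infty^n(X)}$ to be attained (in the limit) within $C^{n,\eps}(X)$. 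Now $\sup_{w\in B_{\ell_\infty^n(X)}}\Phi(w) = \sum_i \norm{f_i}$, and taking $w_i$ in the slice $S(B_X, f_i, \delta)$ makes $\frac1n\sum f_i(w_i)$ close to $\frac1n\sum\norm{f_i}$; the point is to simultaneously arrange $\frac1n\norm{\sum w_i} > 1-\eps$. This is exactly where Theorem~\ref{theo:charSD2P2019} enters: apply the SD2P to the convex combination $\frac1n\sum_i S(B_X, g_i, \delta)$ where $g_i = f_i/\norm{f_i}$ (discarding any $f_i = 0$, which only helps), obtaining a point $x = \frac1n\sum x_i$ with $x_i$ in the $i$-th slice and $\norm{x} > 1-\eps$; then $z' := (x_1,\dots,x_n) \in C^{n,\eps}(X)$ and $\Phi(z') = \frac1n\sum\norm{f_i}\,g_i(x_i)$ is within a controlled error of $\sup\Phi$. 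Letting $\delta \to 0$ and invoking Hahn--Banach separation gives $z \in \cco(C^{n,\eps}(X))$.

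The implication (2)$\Rightarrow$(3) is immediate from $C^{n,\eps}(X) \subseteq C^{n,\eps+}(X)$, since the defining constraint $\norm{z}_\infty \le 1$ is relaxed to $\norm{z}_\infty \le 1+\eps$. For (3)$\Rightarrow$(1) I would run the argument in reverse and verify the criterion in Theorem~\ref{theo:charSD2P2019}: given slices $S(B_X, f_i, \alpha_i)$ with $\norm{f_i}=1$ and $\eta > 0$, pick $x_i \in S(B_X, f_i, \alpha_i/2)$ and form $z := (x_1,\dots,x_n) \in B_{\ell_\infty^n(X)}$; by (3), $z$ is a limit of convex combinations of elements $w^{(k)} \in C^{n,\eps+}(X)$, each $w^{(k)} = \sum_j \lambda_j^{(k)} v^{(k,j)}$ with $\norm{v^{(k,j)}}_\infty \le 1+\eps$ and $\frac1n\norm{\sum_i v_i^{(k,j)}} > 1-\eps$. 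Testing the approximation against $\Phi = (f_1,\dots,f_n)/n$ shows that, for the terms $v^{(k,j)}$ carrying most of the mass, each coordinate $v_i^{(k,j)}$ lies (after normalising by $1+\eps$) in the slice $S(B_X, f_i, \alpha_i)$, while $\frac1n\norm{\sum_i \frac{v_i^{(k,j)}}{1+\eps}}$ is still $> \frac{1-\eps}{1+\eps}$; hence $\frac1n\sum_i \frac{v_i^{(k,j)}}{1+\eps}$ is a point of the convex combination of slices with norm $> \frac{1-\eps}{1+\eps}$, and since $\eps>0$ was arbitrary the criterion of Theorem~\ref{theo:charSD2P2019} holds. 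I expect the main obstacle to be the bookkeeping in (3)$\Rightarrow$(1): one must carefully extract from a convex combination that is merely \emph{close} to $z$ a single summand $v^{(k,j)}$ whose coordinates simultaneously sit in the right slices and whose barycentre has large norm — this requires an averaging/pigeonhole step controlled by how much each functional $f_i$ can lose on a convex combination, together with the slack gained from the $(1+\eps)$-dilation, and getting the quantifiers on $\eps$, $\delta$, $\eta$ in the right order is the delicate part.
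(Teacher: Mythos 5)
Your plan is sound, but the two halves compare differently with the paper. For (1)$\Rightarrow$(2) you take a genuinely different route: you compare support functionals, showing $\sup\Phi\left(C^{n,\eps}(X)\right)\geq\sup\Phi\left(B_{\ell_\infty^n(X)}\right)$ for every $\Phi=(f_1,\dots,f_n)\in\ell_1^n(X^*)$ by feeding the slices $S(B_X,f_i/\norm{f_i},\delta)$ into Theorem~\ref{theo:charSD2P2019} and letting $\delta\to0$, then concluding by Hahn--Banach separation. The paper instead fixes $x\in B_{\ell_\infty^n(X)}$ and, for each weak neighbourhood $U$ of $0$, applies the SD2P to the mean of the relatively weakly open sets $(x_i+U)\cap B_X$ (which implicitly invokes Bourgain's lemma) to build a net in $C^{n,\eps}(X)$ converging weakly to $x$, finishing with Mazur's theorem. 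Your version avoids Bourgain's lemma and the weak-topology bookkeeping at the cost of identifying the dual of $\ell_\infty^n(X)$; both are correct. (Minor point: when some $f_i=0$ you cannot simply ``discard'' it, since you must still produce a full $n$-tuple lying in $C^{n,\eps}(X)$; replace such $f_i$ by an arbitrary norm-one functional instead.)

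For (3)$\Rightarrow$(1) you follow essentially the paper's route --- approximate the tuple of slice points by a convex combination drawn from $C^{n,\eps+}(X)$ and extract by an averaging/pigeonhole argument a single summand whose coordinates sit simultaneously in all $n$ slices --- and you correctly identify that extraction as the crux. One concrete warning: entering the slices only to depth $\alpha_i/2$ is not enough. The Chebyshev-type estimate bounds the $\alpha_j$-mass of summands whose $i$-th coordinate escapes $S(B_X,f_i,\beta)$ by roughly $(\text{depth})/\beta$, and you must union-bound over the $n$ coordinates; with depth $\alpha_i/2$ and $\beta=\alpha_i$ the total exceptional mass bound is $n/2$, which is vacuous for $n\geq2$. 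The paper takes the initial points to depth $(\alpha/n)^2$ and targets the intermediate slices $S(B_X,f_i,\alpha/n)$, giving exceptional mass $<\alpha/n$ per coordinate and $<\alpha<1$ in total, so a common index survives. This is exactly the quantifier tuning you flagged as delicate; the plan goes through once that constant is repaired.
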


\begin{proof}
(1)$\Rightarrow$(2) Let $n \in \N$ and $\eps > 0$ be fixed.

As $C^{n,\eps}(X) \subset B_{\ell_\infty^n(X)}$ it is obvious that $\cco \left( C^{n, \eps}(X) \right) \subset B_{\ell_\infty^n(X)},$ so it only remains to prove the reciprocal inclusion.

Consider $\mathcal{W}$ to be a $w$-open neighbourhood basis of $0$ as a directed set under the reverse inclusion order. Now, given $x =(x_1, \cdots, x_n) \in B_{\ell_\infty^n(X)}$ and $U\in\mathcal W$, we have that
$$\sum_{i=1}^n \dfrac{1}{n} \left[ (x_i + U) \cap B_X \right]$$
is a mean of relatively weakly open subsets of $B_X$. By Theorem~\ref{theo:charSD2P2019} we can obtain, for every $i=1, \cdots, n$, a net $\{x_U^i\}_{U \in \mathcal{W}}$ such that $x_U^i \in (x_i + U) \cap B_X$ for every $U \in \mathcal{W}$, which implies that $\{ x_U^i\}_{U\in \mathcal W}\rightarrow^{w}  x_i$ and it is satisfied that 
\begin{align}\label{caracSD2P1}
    \dfrac{1}{n}\norm{\sum_{i=1}^nx_U^i} > 1-\eps
\end{align}
holds for every $U\in \mathcal W$. We can then define in $\ell_{\infty}^n(X)$ the net $\{x_U\}_{U \in \mathcal{W}}$ by $x_U=(x_U^1, \cdots, x_U^n)$ and observe that, because the weak topology on a finite product is the product of the weak topologies, $\{x_U\}_{U \in \mathcal{W}}\rightarrow^w x$. Then, by Mazur's theorem, we have that
$$x \in \overline{\{x_U: U \in \mathcal{W}\}}^w \subset \overline{\co\{x_U: U \in \mathcal{W}\}}^w = \cco\{x_U: U \in \mathcal{W}\},$$
so given any $\delta> 0$ there exist an $m \in \N$, some $\alpha_1, \cdots, \alpha_m >0$  with $\sum_{j=1}^m \alpha_j =1$ and $U_1, \cdots, U_m \in \mathcal{W}$ such that
\begin{align*}
    \norm{x - \sum_{j=1}^m \alpha_j x_{U_j}}_\infty < \delta.
\end{align*}
By (\ref{caracSD2P1}) we have that $\sum_{j=1}^m \alpha_j x_{U_j} \in \co\left( C^{n, \eps}(X) \right)$ and the arbitrariness of $\delta > 0$ allows us to conclude that $x \in \cco\left( C^{n, \eps}(X)\right)$ as wanted.
\vspace{1mm}

(2)$\Rightarrow$(3) This implication is trivial because $C^{n, \eps}(X) \subset C^{n,\eps+}(X)$.  

(3)$\Rightarrow$(1) Let
$$C=\sum_{i=1}^n \dfrac{1}{n} S(B_X, f_i, \alpha_i)$$
be a convex combination of slices of $B_X$ and $\eps >0$ be given. Without loss of generality we can assume that each $f_i \in S_{X^*}$ and that all the $\alpha_i$ are equal to some $0 < \alpha < 1$ since slices are decreasing with $\alpha$.
    
We can then take for every $i=1, \cdots, n$ an
$$x_i \in S\left(B_X, f_i, \left( \dfrac{\alpha}{n} \right)^2 \right)$$
and $0<r<\eps$ such that for every $i= 1, \cdots, n$
\begin{align}\label{caracSD2P4}
    f_i(x_i) > 1-\left( \dfrac{\alpha}{n} \right)^2  + r.
\end{align}
If we denote $x = (x_1, \cdots, x_n) \in B_{\ell_\infty^n(X)}$, by (3) we have that there exists an $m \in \N$, some $x^j \in C^{n,\delta+}(X)$\footnotemark for $j=1, \cdots, m$ and $\alpha_1, \cdots, \alpha_m >0$  with $\sum_{j=1}^m \alpha_j =1$ such that
\begin{align}\label{caracSD2P5}
    \norm{x - \sum_{j=1}^m \alpha_j x^j}_\infty < (1+\delta)r.
\end{align}
\footnotetext{Where
$$0 < \delta < \min\left\{\dfrac{\eps}{2},\min_{1 \leq i \leq n} \dfrac{f_i(x_i) - \left(1-\left( \dfrac{\alpha}{n} \right)^2   + r\right)}{1-\left( \dfrac{\alpha}{n} \right)^2  + r}\right\}.$$}
By (\ref{caracSD2P4}), (\ref{caracSD2P5}) and the choice of $\delta$ we then obtain that for every $i=1, \cdots, n$
\begin{align}\label{caracSD2P7}
    \dfrac{1}{1+\delta}\sum_{j=1}^m \alpha_j x_i^j \in S\left(B_X, f_i, \left( \dfrac{\alpha}{n} \right)^2 \right),
\end{align}
and then, that for every $i=1, \cdots, n$ there exists some $j_i \in \{1, \cdots, m\}$ such that
\begin{align}\label{caracSD2P7.5}
    \dfrac{1}{1+\delta} x_i^{j_i} \in S\left(B_X, f_i, \left( \dfrac{\alpha}{n} \right)^2 \right) \subset S\left(B_X, f_i, \dfrac{\alpha}{n} \right) \subset S(B_X, f_i,\alpha).
\end{align}
As $x^j \in C^{n,\delta+}(X)$ for every $j= 1, \cdots, m$,  we have that
\begin{align}\label{caracSD2P6}
    \dfrac{1}{n}\norm{\sum_{i=1}^n \dfrac{1}{1+\delta}x^j_i} > \dfrac{1-\delta}{1+\delta} > 1-\eps,
\end{align}
so if we can prove that there exists a common $j_0$ such that (\ref{caracSD2P7.5}) is satisfied for every $i=1, \cdots, n$, then $\sum_{i=1}^n \frac{1}{n} \frac{1}{1+\delta} x_i^{j_0} \in C$ and by (\ref{caracSD2P6}) and Theorem~\ref{theo:charSD2P2019} we would conclude that $X$ has the SD2P as we want.

To accomplish this it is enough to show that if we define for $i=1, \cdots, n$
$$G_i=\left\{j \in \{1, \cdots, m\}: \dfrac{1}{1+\delta}f_i(x_i^j) > 1-\dfrac{\alpha}{n}\right\},$$
(which are non-empty by (\ref{caracSD2P7.5})) we have that
\begin{align}\label{caracSD2P8}
    G=\bigcap_{i=1}^n G_i \not = \emptyset,
\end{align}
which will be proved if we show that for every $i=1, \cdots, n$
\begin{align}\label{caracSD2P9}
        \sum_{j \not \in G_i} \alpha_j < \dfrac{\alpha}{n}.
\end{align}
In fact, if (\ref{caracSD2P9}) is true then
$$\sum_{j \not \in G} \alpha_j \leq \sum_{i=1}^n \sum_{j \not \in G_i} \alpha_j < \sum_{i=1}^n \dfrac{\alpha}{n} = \alpha < 1,$$
and because $\sum_{j=1}^m \alpha_j = 1$ we conclude that $G \not = \emptyset$ as wanted.

To prove (\ref{caracSD2P9}) we just need to observe that by (\ref{caracSD2P7}) we have that for $i=1, \cdots, n$
\begin{align*}
    1-\left( \dfrac{\alpha}{n} \right)^2 & < \sum_{j \in G_i} \alpha_j f_i\left(\dfrac{1}{1+\delta} x_i^j\right) + \sum_{j \not \in G_i} \alpha_j f_i\left(\dfrac{1}{1+\delta} x_i^j\right)\\
     & \leq \sum_{j \in G_i} \alpha_j + \sum_{j \not \in G_i} \alpha_j \left( 1-\dfrac{\alpha}{n} \right)= 1-\dfrac{\alpha}{n} \sum_{j \not \in G_i} \alpha_j,
\end{align*}
and reordering this inequality we obtain (\ref{caracSD2P9}) and conclude the proof.
\end{proof}

With this new tool at our disposal we can give a general characterisation of ultraproducts having the SD2P that, in particular, will provide us with the desired characterisation of the USD2P.

\begin{theorem}\label{ultrasd2p}
Let $\{X_\lambda\}_{\lambda \in \Lambda}$ be a family of Banach spaces and $\U$ a countably incomplete ultrafilter over $\Lambda$. The following are equivalent:
\begin{enumerate}
    \item $(X_\lambda)_\U$ has the \emph{SD2P}.
    \item For every $n \in \N$ and $\eps, \delta > 0$ there exists an $m \in \N$ such that
    $$\{\lambda \in \Lambda: D_m^{n, \eps}(X_\lambda) < \delta\} \in \U.$$
    \item For every $n \in \N$ and $\eps, \delta > 0$ there exists an $m \in \N$ such that
    $$\{\lambda \in \Lambda: D_m^{n, \eps+}(X_\lambda) < \delta\} \in \U.$$
\end{enumerate}
\end{theorem}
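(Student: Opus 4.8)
The plan is to run everything through the intrinsic characterisation of the SD2P given in Theorem~\ref{carsd2p}, which expresses the SD2P for a space $Z$ in terms of the geometry of $B_{\ell_\infty^n(Z)}$ relative to the sets $C^{n,\eps}(Z)$ and $C^{n,\eps+}(Z)$. The first observation I would isolate is the compatibility of the ultraproduct construction with the $\ell_\infty^n$-sum: for any free (in fact any) ultrafilter $\U$ over $\Lambda$ one has a canonical isometric identification $\ell_\infty^n\bigl((X_\lambda)_\U\bigr) \equiv \bigl(\ell_\infty^n(X_\lambda)\bigr)_\U$, sending $\bigl([x^1_\lambda],\dots,[x^n_\lambda]\bigr)$ to $[(x^1_\lambda,\dots,x^n_\lambda)]$. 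This lets me replace the SD2P of $(X_\lambda)_\U$ by the requirement that, for all $n$ and $\eps>0$, every element of the unit ball of the ultraproduct $\bigl(\ell_\infty^n(X_\lambda)\bigr)_\U$ lies in $\cco\bigl(C^{n,\eps}((X_\lambda)_\U)\bigr)$ (and similarly for the $\eps+$ version). So the real content is to understand $C^{n,\eps}$ and its closed convex hull in an ultraproduct.

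Next I would prove the two "transfer" facts that do the work. First, a \emph{lower} transfer: if $z=[z_\lambda] \in \bigl(\ell_\infty^n(X_\lambda)\bigr)_\U$ with each $z_\lambda \in \ell_\infty^n(X_\lambda)$, and $z_\lambda \in C_m^{n,\eps}(X_\lambda)$ for a $\U$-large set of $\lambda$, then $z \in \cco\bigl(C_m^{n,\eps}((X_\lambda)_\U)\bigr)$; indeed write $z_\lambda = \sum_{k=1}^m \lambda_k^\lambda w_\lambda^k$ with $w_\lambda^k$ in the generating set and $(\lambda_k^\lambda)_k$ a probability vector, pass to $\U$-limits of the (bounded) coefficients to get a probability vector $(\lambda_k)_k$, and note that $[w_\lambda^k]$ is again a generator for the ultraproduct since the defining conditions $\|\cdot\|_\infty\le 1$ and $\frac1n\|\sum_i (\cdot)_i\| > 1-\eps$ pass to the $\U$-limit (the strict inequality only needs a $\U$-large set of $\lambda$ on which it holds, which is where one spends a tiny bit of slack — this is exactly why the statement is phrased with an arbitrary $\delta$ and why the $\eps+$ version, with its extra room in the norm bound $\alpha=1+\eps$, is convenient). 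Second, an \emph{approximation/quantitative} direction: by definition $D_m^{n,\eps}(X_\lambda)<\delta$ says $B_{\ell_\infty^n(X_\lambda)} \subset C_m^{n,\eps}(X_\lambda) + \delta B_{\ell_\infty^n(X_\lambda)}$, and this "approximate domination up to $\delta$" is exactly the kind of statement that survives the ultraproduct: if it holds on a $\U$-large set for a fixed $m$, then $B_{\ell_\infty^n((X_\lambda)_\U)} \subset \cco\bigl(C_m^{n,\eps}((X_\lambda)_\U)\bigr) + \delta B$, and letting $\delta \to 0$ gives the Theorem~\ref{carsd2p}(2) condition for $(X_\lambda)_\U$.

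With these in hand the scheme is: for $(3)\Rightarrow(1)$, given $n,\eps$, apply $(3)$ with this $\eps$ and with $\delta_k = 1/k$ to get $m_k$ and $\U$-large sets $A_k$; for $z\in B_{\ell_\infty^n((X_\lambda)_\U)}$, represent $z=[z_\lambda]$ with $z_\lambda \in B_{\ell_\infty^n(X_\lambda)}$ (using Remark~\ref{remark:elereprebolaunidad} componentwise), for $\lambda \in A_k$ pick $c_\lambda^k \in C_{m_k}^{n,\eps+}(X_\lambda)$ within $1/k$ of $z_\lambda$, and diagonalise along $\U$ using countable incompleteness (fix a decreasing sequence $\Lambda = B_0 \supset B_1 \supset \cdots$ in $\U$ with $\bigcap B_j \notin \U$, and on $B_j \cap A_j \setminus B_{j+1}$ use the index $m_j$) to produce a single $\ell_\infty(\Lambda)$-bounded selection whose class is within any prescribed tolerance of $z$ and, by the lower transfer above, lies in $\cco$ of $C_{m}^{n,\eps+}$-type sets of the ultraproduct; conclude $z \in \cco(C^{n,\eps+}((X_\lambda)_\U))$, hence (1) by Theorem~\ref{carsd2p}. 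For $(1)\Rightarrow(2)$: assuming (2) fails, there are $n,\eps,\delta$ so that for every $m$ the set $\{\lambda: D_m^{n,\eps}(X_\lambda)\ge\delta\}\in\U$; pick, for each $m$, a $\U$-large set of $\lambda$ and a point $z_\lambda^{(m)} \in B_{\ell_\infty^n(X_\lambda)}$ with $d(z_\lambda^{(m)}, C_m^{n,\eps}(X_\lambda))\ge\delta$, diagonalise (again via countable incompleteness) to get $z=[z_\lambda]\in B_{\ell_\infty^n((X_\lambda)_\U)}$ such that for \emph{every} fixed $m$ one has $d(z_\lambda, C_m^{n,\eps}(X_\lambda))\ge \delta$ on a $\U$-large set, which forces $d\bigl(z, \co_m(C^{n,\eps}((X_\lambda)_\U))\bigr)\ge\delta$ for every $m$ and therefore $z \notin \cco(C^{n,\eps}((X_\lambda)_\U))$, contradicting Theorem~\ref{carsd2p}(2) for $(X_\lambda)_\U$; finally $(2)\Rightarrow(3)$ is immediate from $C_m^{n,\eps}\subset C_m^{n,\eps+}$, which gives $D_m^{n,\eps+}\le D_m^{n,\eps}$.

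The main obstacle, and the only genuinely delicate point, is the careful management of the index $m$ together with the $\eps$-vs-$\delta$ slack when pushing convex-combination-of-generators data through $\U$: the number of terms $m$ in a convex combination is uniform across $\lambda$ by hypothesis, which is exactly what makes the $\U$-limit of the coefficient vectors a legitimate probability vector of the same length, but one must make sure the countable-incompleteness diagonalisation does not secretly let $m\to\infty$, and that the strict norm inequality defining the generators is not lost in the limit — handled by working with the $\eps+$ version (extra norm room $\alpha=1+\eps$) wherever a strict inequality must be preserved, and by absorbing the diagonalisation error into the free parameter $\delta$. Everything else — the isometric identification of the $\ell_\infty^n$-sum with the ultraproduct, Mazur-type closure arguments, and the bookkeeping with $\cco$ — is routine once this is set up.
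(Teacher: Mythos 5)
Your overall strategy coincides with the paper's: everything is routed through Theorem \ref{carsd2p}, the sets $C_m^{n,\eps}$ are transferred between the coordinate spaces and the ultraproduct keeping the number $m$ of convex-combination terms fixed (so that the $\U$-limits of the coefficient vectors remain probability vectors of length $m$), countable incompleteness is used to diagonalise in the direction (1)$\Rightarrow$(2), and (2)$\Rightarrow$(3) is the trivial inequality $D_m^{n,\eps+}\leq D_m^{n,\eps}$. That part is sound and matches the paper.

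Two steps in your sketch would fail as written. First, in (3)$\Rightarrow$(1) your diagonalisation over $\delta_k=1/k$ produces a selection whose $\lambda$-th coordinate lies in $C_{m_{j(\lambda)}}^{n,\eps+}(X_\lambda)$ with $j(\lambda)$ unbounded along $\U$; your ``lower transfer'' lemma requires a uniform $m$, so it does not apply to that selection, and the worry you yourself raise about $m\to\infty$ is never discharged. The repair is that no diagonalisation is needed in this direction (and this is exactly what the paper does): for each fixed $\delta>0$ the hypothesis supplies a single $m$ working on a $\U$-large set, the resulting limit point $w$ lies in $\co_m\left(C^{n,\eps+}((X_\lambda)_\U)\right)$ and satisfies $\norm{z-w}_\infty\leq\delta$, and the arbitrariness of $\delta$ puts $z$ in the closed convex hull; countable incompleteness is consumed only in (1)$\Rightarrow$(2). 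Second, the slack needed to keep the generator condition strict after passing to the $\U$-limit cannot come from the $\eps+$ relaxation, which only enlarges the norm bound to $\alpha=1+\eps$: if $\frac{1}{n}\norm{\sum_{i} z_\lambda^{j,i}}>1-\eps$ on a $\U$-large set, the limit is only $\geq 1-\eps$, which need not be $>1-\eps$. The paper runs the argument with $\eps/2$ in place of $\eps$ (working with $C_m^{n,\eps/2+}(X_\lambda)$) so that the limiting generators satisfy $\geq 1-\eps/2>1-\eps$ and $\leq 1+\eps/2<1+\eps$; you need the same device. With these two adjustments your argument becomes the paper's argument.
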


\begin{proof}
Along the proof we will denote $X = (X_\lambda)_\U$.

(1) $\Rightarrow$ (2) By way of contradiction, let us suposse that this implication is not true, that is, we will assume that $X$ has the SD2P, but there exist some $n_0 \in \N$ and $\eps_0, \delta_0 >0$ such that for every $m \in \N$
\begin{align}\label{teo1.1}
D_m = \{\lambda \in \Lambda: D_m^{n_0, \eps_0}(X_\lambda) \geq \delta_0\} \in \U.
\end{align}

Then, if we can find some $x \in B_{\ell_\infty^{n_0}(X)}$ such that
\begin{align}\label{teo1.2}
    d \left(x,\co \left( C^{n_0, \eps_0}(X) \right) \right) \geq \dfrac{\delta_0}{2},
\end{align}
by Theorem \ref{carsd2p} we will obtain that $X$ does not have the SD2P, which is the desired contradiction. So, to complete the proof it is enough to use (\ref{teo1.1}) to construct a point $x \in B_{\ell_\infty^{n_0}(X)}$ such that for any $y \in \co \left( C^{n_0, \eps_0}(X) \right)$
\begin{align}\label{teo1.3}
\norm{x - y}_\infty = \max_{1 \leq i \leq n_0} \norm{x^i - y^i}_\U \geq \dfrac{\delta_0}{2}.
\end{align}

To accomplish our objective, consider first a family $\{A'_p\}_{p \in \N_0} \subset \U$ such that $\bigcap_{p \in \N_0} A'_p = \emptyset$ (which exists as $\U$ is countably incomplete) and define for $p \in \N$
\begin{align}
    A_p = \bigcap_{1 \leq n \leq p} (D_n \cap A_{n-1}') \in \U.
\end{align}
It is obvious that $A_{p+1} \subset A_p$ for every $p \in \N$ and that $\bigcap_{p \in \N} A_p = \emptyset$, so $\{A_p \setminus A_{p+1}: p \in \N\}$ is a partition of $\bigcup_{p \in \N} A_p$.

Given some $\lambda \in \Lambda$, if $\lambda \not \in \bigcup_{p \in \N} A_p$ we can define $x_\lambda = (0, \cdots, 0) \in B_{\ell_\infty^{n_0}(X_\lambda)}$, and in any other case there exists a unique $p \in \N$ such that $\lambda \in A_p \setminus A_{p+1}$, so in particular $\lambda \in D_p$ and
$$\sup_{z \in B_{\ell_\infty^{n_0}(X_\lambda)}} d \left( z, C_p^{n_0, \eps_0}(X_\lambda) \right) \geq \delta_0 
> \dfrac{\delta_0}{2}.$$
Consequently, there exists some $x_\lambda = (x^1_\lambda, \cdots, x^{n_0}_\lambda) \in B_{\ell_\infty^{n_0}(X_\lambda)}$ such that
\begin{align}\label{teo1.12}
    d \left( x_\lambda, C_p^{n_0, \eps_0}(X_\lambda) \right) > \dfrac{\delta_0}{2}.
\end{align}

Defining $x = \left( [x^1_\lambda]_\U, \cdots, [x_\lambda^{n_0}]_\U \right)$, it is clear that $x \in B_{\ell_\infty^{n_0}(X)}$ and it remains to see if it satisfies (\ref{teo1.3}).

Given any $y \in \co \left( C^{n_0, \eps_0}(X) \right)$, we have that $y \in C_m^{n_0, \eps_0}(X)$ for some $m \in \N$, so there exist some $\alpha_1, \cdots, \alpha_m > 0$ such that $\sum_{j=1}^m \alpha_j = 1$ and some $z^1, \cdots, z^m \in B_{\ell_\infty^{n_0}(X)}$ (so that we can express them as $z^j = \left( z^{j,1}, \cdots, z^{j,n_0} \right)$ with $\norm{z^{j,i}}_\U \leq 1$) such  that
\begin{align}
    & y = \sum_{j=1}^m \alpha_j z^j, \label{teo1.4}\\
    & \dfrac{1}{n_0}\norm{\sum_{i=1}^{n_0} z^{j,i}}_\U > 1-\eps_0.\label{teo1.5}
\end{align}

As for every $i =1, \cdots, n_0$ and $j=1, \cdots, m$ we have that $z^{j,i} \in X$, we can choose some representatives $(z^{j,i}_\lambda)_{\lambda \in \Lambda} \in \prod_{\lambda \in \Lambda} X_\lambda$ such that $z^{j,i} = [z^{j,i}_\lambda]_\U$. Observe that we can assume, with no loss of generality, that $\Vert z^{j,i}_\lambda\Vert\leq 1$ holds for every $\lambda$ in virtue of Remark~\ref{remark:elereprebolaunidad}. Using (\ref{teo1.4}) we obtain that if we define $(y^i_\lambda)_{\lambda \in \Lambda}$ as
\begin{align}\label{tt}
    y^i_\lambda = \sum_{j=1}^m \alpha_j z^{j,i}_\lambda,
\end{align}
then it is a representative for every $y^i$ with $i=1, \cdots, n$. Finally, we will denote $y_\lambda = (y^1_\lambda, \cdots, y^{n_0}_\lambda)$ and $z^j_\lambda = (z_\lambda^{j,1}, \cdots, z_\lambda^{j,n_0})$.

Now, utilizing that $\norm{x^i - y^i}_\U = \lim_\U \norm{x^i_\lambda - y^i_\lambda}$ we obtain that for every $\eta > 0$
\begin{align}\label{teo1.6}
    B_\eta = \bigcap_{i=1}^{n_0} \left\{ \lambda \in \Lambda: \abs{\norm{x^i_\lambda - y^i_\lambda} -  \norm{x^i - y^i}_\U} < \eta \right\} \in \U.
\end{align}
Also, by (\ref{teo1.5}) we have that
\begin{align}\label{teo1.7}
    C = \bigcap_{j=1}^m \left\{ \lambda \in \Lambda : \dfrac{1}{n_0} \norm{\sum_{i=1}^{n_0} z^{j,i}_\lambda} > 1 - \eps_0 \right\} \in \U.
\end{align}

Taking $\lambda \in A_m \cap B_\eta \cap C$ we have that for every $i=1, \cdots, n_0$
$$\norm{x^i - y^i}_\U > \norm{x^i_\lambda - y^i_\lambda} - \eta,$$
so
\begin{align}\label{teo1.9}
    \norm{x -y}_\infty > \norm{x_\lambda - y_\lambda}_\infty - \eta.
\end{align}
In addition, as $\lambda \in C $ we know that $z^{j}_\lambda \in C^{n_0, \eps_0}(X_\lambda)$ for every $j=1, \cdots, m$ and by (\ref{tt})
\begin{align}\label{teo1.10}
    y_\lambda = \sum_{j=1}^m \alpha_j z^j_\lambda \in C_m^{n_0, \eps_0}(X_\lambda).
\end{align}

Finally, as $\lambda \in A_m$ there exists a unique $p \geq m$ such that $\lambda \in A_p \setminus A_{p+1}$ and by (\ref{teo1.12}) we get that
\begin{align}\label{teo1.11}
d(x_\lambda,  C_m^{n_0, \eps_0}(X_\lambda)) \geq d(x_\lambda,  C_p^{n_0, \eps_0}(X_\lambda)) > \dfrac{\delta_0}{2}.
\end{align}
Putting together (\ref{teo1.9}), (\ref{teo1.10}) and (\ref{teo1.11}) we conclude that
$$\norm{x - y}_\infty > \norm{x_\lambda - y_\lambda}_\infty - \eta \geq d(x_\lambda,  C_m^{n_0, \eps_0}(X_\lambda)) - \eta > \dfrac{\delta_0}{2} - \eta.$$
By the arbitrariness of $\eta > 0$ we obtain that
$$\norm{x - y}_\infty \geq \dfrac{\delta_0}{2}$$
and the arbitrariness of $y \in \co \left( C^{n_0, \eps_0}(X) \right)$ proves (\ref{teo1.3}) as desired.

(2) $\Rightarrow$ (3) This is trivial taking into account that $D_m^{n, \eps+}(X_\lambda) \leq D_m^{n, \eps}(X_\lambda)$.

(3) $\Rightarrow$ (1) Because of Theorem \ref{carsd2p}, it is enough to show that given $n \in \N$, $\eps > 0$ and $x = (x^1, \cdots, x^n) \in B_{\ell_\infty^n (X)}$ we have that
$$x \in \cco \left( C^{n, \eps+}(X) \right).$$
In other words, we must see that for every $\delta > 0$
\begin{align}\label{teo2.1}
    B(x, \delta) \cap \co \left( C^{n, \eps+}(X) \right) \not = \emptyset.
\end{align}

As for every $i=1, \cdots, n$ we know that $x^i \in X$, there exist some functions $(x^i_\lambda)_{\lambda \in \Lambda} \in \prod_{\lambda \in \Lambda} X_\lambda$ such that $x^i = [x^i_\lambda]_\U$, and as $\lim_\U \norm{x^i_\lambda} = \norm{x^i}_\U \leq 1$ we can assume, in virtue of Remark~\ref{remark:elereprebolaunidad}, that $\Vert x_\lambda^i\Vert\leq 1$ holds for every $i$ and every $\lambda$.

Furthermore, by hypothesis there exists an $m \in \N$ such that
$$B = \left\{ \lambda \in \Lambda: D_m^{n, \eps/2+}(X_\lambda) < \delta \right\} \in \U,$$
so for any $\lambda \in B$ we have that
$$\sup_{z \in B_{\ell_\infty^n(X_\lambda)}} d\left( z, C_m^{n, \eps/2+}(X_\lambda) \right) < \delta.$$

In particular, if $\lambda \in B$
$$d\left( x_\lambda,  C_m^{n, \eps/2+}(X_\lambda) \right) < \delta$$
and there exists some $y_\lambda \in C_m^{n, \eps/2+}(X_\lambda)$ such that
\begin{align}\label{teo2.2}
     \norm{x_\lambda - y_\lambda}_\infty < \delta.
\end{align}
Since $y_\lambda \in C_m^{n, \eps/2+}(X_\lambda)$, there exist some $\alpha_\lambda^1, \cdots, \alpha_\lambda^m > 0$ satisfying $\sum_{j=1}^m \alpha_\lambda^j = 1$ and some $z_\lambda^{1}, \cdots, z_\lambda^{m} \in (1+\eps/2) B_{\ell_\infty^n(X_\lambda)}$ such that
\begin{align}\label{teo2.3}
y_\lambda = \sum_{j=1}^m \alpha_\lambda^j z_\lambda^{j}
\end{align}
and for every $j = 1, \cdots, m$
\begin{align}\label{teo2.4}
\dfrac{1}{n}\norm{\sum_{i=1}^n z_\lambda^{j, i}} > 1-\dfrac{\eps}{2}.
\end{align}

On the other hand, if $\lambda \not \in  B$ we simply define $\alpha_\lambda^j = 0, z_\lambda^j = (0, \cdots, 0)$ for every $j=1, \cdots, m$.

Now we can define, for every $j= 1, \cdots, m$,
$$\alpha^j = \lim_\U \alpha_\lambda^j,$$
which clearly satisfy that $\alpha^j \geq 0$ and $\sum_{j=1}^m \alpha^j = 1$. Define also
$$w^j = ([z_\lambda^{j,1}]_\U, \cdots, [z_\lambda^{j,n}]_\U) \in \ell_\infty^n(X), 1\leq j\leq m$$
and
$$w = \sum_{j=1}^m \alpha^j w^j \in \ell_\infty^n(X).$$
We also define for every $i=1, \cdots,n$ and $\lambda \in \Lambda$
$$w^i_\lambda = \sum_{j=1}^m \alpha^j z_\lambda^{j,i} \in \ell_\infty^n(X_\lambda)$$
and it is obvious that $w = ([w_\lambda^1]_\U, \cdots, [w_\lambda^n]_\U)$.

Let us see that
\begin{align}\label{teo2.5}
    w \in B(x, \delta) \cap \co \left( C^{n, \eps+}(X) \right),
\end{align}
which proves (\ref{teo2.1}) and finishes the proof.

On the one hand, we must see that for every $i=1, \cdots, n$
\begin{align}\label{teo2.6}
\norm{x^i - w^i}_\U \leq \delta.
\end{align}
Given $\eta > 0$ we know that
$$C_\eta = \left\{ \lambda \in \Lambda : \abs{\norm{x^i - w^i}_\U - \norm{x_\lambda^i - w_\lambda^i}} < \dfrac{\eta}{2} \right\} \in \U$$
and that
$$D = \bigcap_{j=1}^m \left\{ \lambda \in \Lambda: \abs{\alpha^j - \alpha_\lambda^j} < \dfrac{\eta}{2m(1+\frac{\eps}{2})}\right\} \in \U,$$
so taking $\lambda \in  B \cap C_\eta \cap D$ we obtain that
\begin{align*}
    \norm{x^i - w^i}_\U \underset{\lambda \in C_\eta}{\leq} & \norm{x_\lambda^i - w_\lambda^i} + \dfrac{\eta}{2} \\
    \leq & \norm{x^i_\lambda - \sum_{j=1}^m \alpha_\lambda^j z_\lambda^{j,i}} + \norm{\sum_{j=1}^m (\alpha_\lambda^j - \alpha^j) z_\lambda^{j,i}} + \dfrac{\eta}{2} \\ 
    \underset{(\ref{teo2.3})}{=} & \norm{x_\lambda^i - y_\lambda^i} + \norm{\sum_{j=1}^m (\alpha_\lambda^j - \alpha^j) z_\lambda^{j,i}} + \dfrac{\eta}{2} \\
    \underset{(\ref{teo2.2})}{\leq} & \delta + \left( 1 + \dfrac{\eps}{2} \right)\sum_{j=1}^m \abs{\alpha_\lambda^j - \alpha^j} + \dfrac{\eta}{2} \underset{\lambda \in D}{<} \delta + \eta.
\end{align*}
The arbitrariness of $\eta > 0$ proves (\ref{teo2.6}).

On the other hand, we must see that $w \in \co \left( C^{n, \eps+}(X) \right)$ and, since $w = \sum_{j=1}^m \alpha^j w^j$, it is enough to see that for any $j=1, \cdots, m$
\begin{align}\label{teo2.7}
    w^j \in C^{n, \eps+}(X).
\end{align}

In the first place, $w^j \in (1+\eps)B_{\ell_\infty^n(X)}$ because for every $i=1, \cdots, n$ and $\lambda \in \Lambda$ we have that $\norm{z^{j,i}_\lambda} \leq 1+\eps/2$ and then
$$\norm{w^{j, i}}_\U = \norm{[z_\lambda^{j,i}]}_\U = \lim_\U \norm{z^{j,i}_\lambda} \leq 1 + \dfrac{\eps}{2} < 1+ \eps.$$
On the other side, given $\eta > 0$ we know that
$$E_\eta = \left\{ \lambda \in \Lambda : \abs{ \norm{\sum_{i=1}^n w^{j,i}}_\U - \norm{\sum_{i=1}^n z^{j,i}_\lambda}} < \eta \right\} \in \U,$$
so taking any $\lambda \in  B \cap E_\eta$ we obtain by (\ref{teo2.4}) that
\begin{align*}
    \dfrac{1}{n} \norm{\sum_{i=1}^n w^{j,i}}_\U > \dfrac{1}{n} \norm{\sum_{i=1}^n z^{j,i}_\lambda} - \dfrac{1}{n}\eta > 1-\dfrac{\eps}{2} - \dfrac{1}{n}\eta.
\end{align*}
The arbitrariness of $\eta > 0$ implies that
$$\dfrac{1}{n} \norm{\sum_{i=1}^n w^{j,i}}_\U \geq 1-\dfrac{\eps}{2} > 1-\eps$$
and the proof of (\ref{teo2.7}) is complete.

Putting together (\ref{teo2.6}) and (\ref{teo2.7}) we have proved (\ref{teo2.5}) and we can conclude the whole proof.
\end{proof}

\begin{corollary}\label{charUSD2P}
Let $X$ be a Banach space. The following are equivalent:
\begin{enumerate}
    \item $X$ has the \emph{USD2P}.
    \item There exists a free ultrafilter $\U$ over $\N$ such that $X_\U$ has the \emph{SD2P}.
    \item There exists a set $\Lambda$ and a countably incomplete ultrafilter $\U$ over $\Lambda$ such that $X_\U$ has \emph{SD2P}.
    \item For every $n \in \N$ and $\eps > 0$
        $$\lim_{k \to \infty} D_k^{n, \eps}(X) = 0,$$
        that is, given any $\delta > 0$ there exists an $m \in \N$ such that $D_m^{n, \eps}(X) < \delta$. 
    \item For every $n \in \N$ and $\eps > 0$
        $$\lim_{k \to \infty} D_k^{n, \eps+}(X) = 0,$$
        that is, given any $\delta > 0$ there exists a $m \in \N$ such that $D_m^{n, \eps+}(X) < \delta$. 
    \item For any set $\Lambda$ and any countably incomplete ultrafilter $\U$ over $\Lambda$ the ultrapower $X_\U$ has the \emph{SD2P}.
\end{enumerate}
\end{corollary}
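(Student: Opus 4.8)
The plan is to deduce everything from Theorem~\ref{ultrasd2p} and the definition of the USD2P, organising the equivalences as a cycle together with a couple of side implications. The key observation is that conditions (4) and (5) are exactly conditions (2) and (3) of Theorem~\ref{ultrasd2p} once one recognises that, for an \emph{ultrapower} $X_\U$ of a single space, the set $\{\lambda \in \Lambda : D_m^{n,\eps}(X_\lambda) < \delta\}$ is either $\emptyset$ or all of $\Lambda$ (since $X_\lambda = X$ for every $\lambda$), and hence it belongs to $\U$ precisely when $D_m^{n,\eps}(X) < \delta$. Thus Theorem~\ref{ultrasd2p} applied to the constant family $X_\lambda = X$ gives, for \emph{any} countably incomplete ultrafilter $\U$ over \emph{any} set $\Lambda$, the equivalence ``$X_\U$ has the SD2P $\iff$ (4) $\iff$ (5)'', where (4) and (5) no longer depend on $\U$ at all.

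First I would record that any free ultrafilter over $\N$ is countably incomplete (indeed any free ultrafilter over a countable set is), so (1) is literally ``$X_\U$ has the SD2P for every free ultrafilter $\U$ over $\N$'', a subfamily of the ultrafilters considered in (6). With that in hand the cycle is short. For (1) $\Rightarrow$ (2): trivial, since there is at least one free ultrafilter over $\N$. For (2) $\Rightarrow$ (4): by the remark above, Theorem~\ref{ultrasd2p} (applied to the constant family, with the witnessing free ultrafilter over $\N$) turns ``$X_\U$ has the SD2P'' into condition (2) of that theorem, which for a constant family is exactly (4). For (4) $\iff$ (5): this is (2) $\iff$ (3) of Theorem~\ref{ultrasd2p} specialised to the constant family, or one can argue directly from $D_m^{n,\eps+}(X) \leq D_m^{n,\eps}(X)$ for one implication and a rescaling/$\eps$-halving trick mirroring (3)$\Rightarrow$(1) of Theorem~\ref{carsd2p} for the other — but invoking Theorem~\ref{ultrasd2p} is cleanest. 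For (4) $\Rightarrow$ (6): given an arbitrary set $\Lambda$ and a countably incomplete ultrafilter $\U$ over it, condition (4) provides, for each $n,\eps,\delta$, an $m$ with $D_m^{n,\eps}(X) < \delta$, and then $\{\lambda : D_m^{n,\eps}(X_\lambda) < \delta\} = \Lambda \in \U$, so Theorem~\ref{ultrasd2p}(2) holds and $X_\U$ has the SD2P. For (6) $\Rightarrow$ (3): trivial, as countably incomplete ultrafilters over some set exist (e.g.\ any free ultrafilter over $\N$). Finally (3) $\Rightarrow$ (1): from (3) and Theorem~\ref{ultrasd2p} we again get condition (4) (which does not depend on the ultrafilter), and then the implication (4) $\Rightarrow$ (6) just established, specialised to free ultrafilters over $\N$, yields (1). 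Together with (5), which sits in the cycle via (4) $\iff$ (5), this closes all equivalences.

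I do not expect a serious obstacle here: the whole corollary is a bookkeeping exercise on top of Theorem~\ref{ultrasd2p}. The one point that deserves a careful sentence is the reduction of the ``$\in \U$'' membership condition to a genuinely ultrafilter-free statement: one must spell out that for a constant family the set $\{\lambda : D_m^{n,\eps}(X_\lambda) < \delta\}$ equals $\Lambda$ if $D_m^{n,\eps}(X) < \delta$ and equals $\emptyset$ otherwise, and that $\Lambda \in \U$ always while $\emptyset \notin \U$ — this is what makes conditions (4) and (5) simultaneously characterise the SD2P of \emph{every} such ultrapower, which is the whole content of the corollary. A second minor point worth stating explicitly is that free ultrafilters over $\N$ are countably incomplete, so that (1) and (6) really do talk about overlapping (indeed nested) classes of ultrafilters; this is standard and needs only a one-line justification (a decreasing sequence of cofinite sets with empty intersection).
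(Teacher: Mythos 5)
Your proposal is correct and follows essentially the same route as the paper: both reduce everything to Theorem~\ref{ultrasd2p} applied to the constant family $X_\lambda=X$, using that for such a family the set $\{\lambda : D_m^{n,\eps}(X_\lambda)<\delta\}$ is either $\Lambda$ or $\emptyset$, so membership in $\U$ becomes the ultrafilter-free condition $D_m^{n,\eps}(X)<\delta$, and both close the equivalences by a short cycle of trivial implications plus the inequality $D_k^{n,\eps+}(X)\le D_k^{n,\eps}(X)$. The only detail you leave implicit is the monotonicity $D_k^{n,\eps}(X)\le D_m^{n,\eps}(X)$ for $k\ge m$ (coming from $\co_m(C)\subseteq\co_k(C)$), which the paper records explicitly in order to upgrade ``there exists $m$ with $D_m^{n,\eps}(X)<\delta$'' to the limit statement in (4).
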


\begin{proof}
(1) $\Rightarrow$ (2), (2) $\Rightarrow$ (3) and (6) $\Rightarrow$ (1) are trivial given there exists a free ultrafilter over $\N$ and that any free ultrafilter over $\N$ is countably incomplete.

(3) $\Rightarrow$ (4) By (3) there exists a countably incomplete ultrafilter $\U$ over some set $\Lambda$ such that $X_\U$ has the SD2P. Let $n \in \N$ and $\eps > 0$ be fixed.

Given any $\delta >0$, by Theorem \ref{ultrasd2p} there exists an $m \in \N$ such that
$$\left\{ \lambda \in \Lambda : D_m^{n, \eps}(X) < \delta \right\} \in \U,$$
so, in particular, this last set is non-empty and we get that
$$D_m^{n, \eps}(X) < \delta.$$
Moreover, if $k \geq m$ we have that
$$D_k^{n, \eps}(X) \leq D_m^{n, \eps} (X) < \delta$$
and the arbitrariness of $\delta >0$ proves that $\lim_{k \to \infty} D_k^{n, \eps}(X) = 0$.

(4) $\Rightarrow$ (5) It is enough to observe that $0 \leq D_k^{n, \eps+}(X) \leq D_k^{n, \eps}(X)$

(5) $\Rightarrow$ (6) Let $\U$ be a countably incomplete ultrafilter over some set $\Lambda$. By Theorem \ref{ultrasd2p}, to see that $X_\U$ has the SD2P it is enough to prove that for any $n \in \N$ and $\eps, \delta > 0$ there exists some $m \in \N$ such that
$$\left\{ \lambda \in \Lambda: D_m^{n,\eps+}(X) < \delta \right\} \in \U,$$
that is, it is enough to see that there exists an $m \in \N$ such that $D_m^{n,\eps+}(X) < \delta$, but this is immediate from (5).
\end{proof}

\subsection{Stability results}

To finish this section we will establish some stability results for the USD2P which will also help us to providing new examples.

It is easy to see that for any ultrafilter $\U$ over some set and for $1 \leq p \leq \infty$ the map
$$\begin{aligned}
    T : X_\U \oplus_p Y_\U & \longrightarrow (X \oplus_p Y)_\U \\
    ([x_i]_\U,[y_i]_\U) & \longmapsto [(x_y, y_i)]_\U
\end{aligned}$$
is well defined, linear and surjective. Besides, using the properties of ultrafilter limits it is also easy to prove that this map is indeed an isometry. The next stability result is now immediate if one takes into account \cite[Proposition 4.6]{aln} and \cite[Proposition 3.1 and Theorem 3.2]{abg}:

\begin{proposition}\label{stabilitysum}
Let $X$ and $Y$ be Banach spaces. Then:
\begin{enumerate}
    \item $X \oplus_1 Y$ has the \emph{USD2P} iff $X$ and $Y$ also have it.
    \item If one of $X$ or $Y$ has the \emph{USD2P}, then $X \oplus_\infty Y$ also has it.
    \item For $1 < p < \infty$, $X \oplus_p Y$ does not have the \emph{USD2P}.
\end{enumerate} 
\end{proposition}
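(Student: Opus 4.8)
The plan is to reduce each of the three items to the already-known behaviour of the SD2P under $\ell_p$-sums, by means of the isometric identification $(X\oplus_p Y)_\U\cong X_\U\oplus_p Y_\U$ recorded just above the statement. Combined with the definition of the USD2P (and with Corollary~\ref{charUSD2P}, which allows one to pass freely between ``for every'' and ``for some'' free ultrafilter over $\N$), this converts the question of whether $X\oplus_p Y$ has the USD2P into the question of whether $X_\U\oplus_p Y_\U$ has the SD2P as $\U$ ranges over the free ultrafilters on $\N$, and the latter is governed by \cite[Proposition 4.6]{aln} and \cite[Proposition 3.1 and Theorem 3.2]{abg}.

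For (1), fix a free ultrafilter $\U$ over $\N$. By \cite[Proposition 4.6]{aln} the $\ell_1$-sum $X_\U\oplus_1 Y_\U$ has the SD2P if and only if both $X_\U$ and $Y_\U$ do. Since the quantifier ``for every free ultrafilter on $\N$'' distributes over this conjunction, $(X\oplus_1 Y)_\U\cong X_\U\oplus_1 Y_\U$ has the SD2P for every such $\U$ precisely when $X_\U$ has the SD2P for every such $\U$ and $Y_\U$ has the SD2P for every such $\U$; by the definition of the USD2P this is exactly the assertion that $X\oplus_1 Y$ has the USD2P iff $X$ and $Y$ both have it. For (2), suppose $X$ has the USD2P (the case of $Y$ being symmetric). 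Then $X_\U$ has the SD2P for every free $\U$ over $\N$, and since an $\ell_\infty$-sum inherits the SD2P from a single summand by \cite[Proposition 3.1]{abg}, the space $(X\oplus_\infty Y)_\U\cong X_\U\oplus_\infty Y_\U$ has the SD2P for every such $\U$, i.e. $X\oplus_\infty Y$ has the USD2P. For (3), recall that $X\oplus_p Y$ with $1<p<\infty$ never has the SD2P by \cite[Theorem 3.2]{abg} (here we use, as is implicit, that both summands are nonzero); since the USD2P implies the SD2P — as observed right after the definition of the USD2P, via the ai-ideal embedding of $X$ in its ultrapowers together with \cite[Proposition 3.3]{aln2} — the space $X\oplus_p Y$ cannot have the USD2P.

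No genuine difficulty remains once the isometric description of ultrapowers of $\ell_p$-sums and Corollary~\ref{charUSD2P} are in place; the only points demanding a little care are the bookkeeping with the ``for all free ultrafilters'' quantifier in part (1) and checking that the quoted SD2P stability results are invoked in the correct direction — in particular that \cite[Proposition 4.6]{aln} really yields the two-sided implication for $\ell_1$-sums, not merely one of the two directions.
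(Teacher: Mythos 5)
Your proposal is correct and follows exactly the route the paper intends: the isometry $(X\oplus_p Y)_\U\cong X_\U\oplus_p Y_\U$ reduces everything to the known SD2P stability results of \cite{aln} and \cite{abg}, which is precisely why the paper declares the proposition ``immediate'' from those references. Your extra care about the quantifier bookkeeping in (1) and the nonzero-summands caveat in (3) is sound but does not change the argument.
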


\begin{remark}
This proposition also allows us to distinguish the USD2P and a uniform version of the D2P. Indeed, it is enough to take any Banach space with this uniform version of the D2P (any of the examples we gave for the USD2P will suffice) and observe that $X \oplus_2 X$ will also have the uniform version of the D2P (by the isometry $T$ and \cite[Theorem 2.4]{abg}), but it will not have the USD2P by Proposition \ref{stabilitysum}. We do not know if this uniform version of the D2P is different from the uniform version of the slice-D2P studied in \cite{rz25}.
\end{remark}

Looking for stability under subspaces, we already mentioned that the SD2P is inherited by ai-ideals \cite[Proposition 3.3]{aln2} and the following result shows that the same holds true for the uniform version. 

\begin{proposition}\label{prop:herenciaSD2Puniaiideales}
Let $X$ be a Banach space with the \emph{USD2P} and let $Y\subseteq X$ be a subspace. If $Y$ is an ai-ideal in $X$, then $Y$ also has the \emph{USD2P}.
\end{proposition}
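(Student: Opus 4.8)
The plan is to avoid ultrapowers entirely and to argue through the quantitative characterisation in Corollary~\ref{charUSD2P}: by item~(5) of that corollary it is enough to show that $\lim_{k\to\infty}D_k^{n,\eps+}(Y)=0$ for every $n\in\N$ and every $\eps>0$, and one feeds in the hypothesis on $X$ via item~(4) for $X$, i.e. $\lim_{k\to\infty}D_k^{n,\eps/2}(X)=0$. Since $\co_k(S)\subseteq\co_{k+1}(S)$ for any set $S$, the quantity $D_k^{n,\eps+}(\cdot)$ is nonincreasing in $k$, and it is also nonincreasing in $\eps$, so one may assume $0<\eps<1$ throughout.

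First I would fix $n\in\N$, $\eps\in(0,1)$ and $\delta>0$, and use $\lim_k D_k^{n,\eps/2}(X)=0$ to pick $m\in\N$, depending only on $n,\eps,\delta$, with $D_m^{n,\eps/2}(X)<\delta/2$. Given $z=(z_1,\dots,z_n)\in B_{\ell_\infty^n(Y)}\subseteq B_{\ell_\infty^n(X)}$, this produces $\lambda_1,\dots,\lambda_m\ge 0$ with $\sum_j\lambda_j=1$ and vectors $x^j=(x_1^j,\dots,x_n^j)\in B_{\ell_\infty^n(X)}$ with $\tfrac1n\Vert\sum_{i=1}^n x_i^j\Vert>1-\tfrac\eps2$ such that $y:=\sum_{j=1}^m\lambda_j x^j$ satisfies $\Vert z-y\Vert_\infty<\delta/2$. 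The crux is to transport the $x^j$ into $Y$: choose $\eps'\in(0,1)$ small enough that $(1-\eps')(1-\tfrac\eps2)>1-\eps$ (this also forces $\eps'<\eps$), set $E:=\lspan\big(\{z_i\}_{i}\cup\{x_i^j\}_{i,j}\big)\subseteq X$, and apply the ai-ideal property of $Y$ in $X$ to obtain a linear $(1+\eps')$-isometry $T\colon E\to Y$ fixing $E\cap Y$. Applying $T$ coordinatewise gives $\tilde x^j:=(Tx_1^j,\dots,Tx_n^j)\in\ell_\infty^n(Y)$ and $\tilde y:=\sum_j\lambda_j\tilde x^j=(Ty_1,\dots,Ty_n)$, and the claim is that $\tilde y\in C_m^{n,\eps+}(Y)$ with $\Vert z-\tilde y\Vert_\infty<\delta$.

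Checking the claim is then routine: $\Vert\tilde x^j\Vert_\infty\le(1+\eps')\le 1+\eps$; using $\sum_i Tx_i^j=T(\sum_i x_i^j)$ and the lower estimate for $T$, $\tfrac1n\Vert\sum_i Tx_i^j\Vert\ge(1-\eps')(1-\tfrac\eps2)>1-\eps$, so every $\tilde x^j$ lies in the generating set of $C_m^{n,\eps+}(Y)$ and hence $\tilde y\in C_m^{n,\eps+}(Y)$; and since $Tz_i=z_i$ (as $z_i\in E\cap Y$), $\Vert z-\tilde y\Vert_\infty=\max_i\Vert T(z_i-y_i)\Vert\le(1+\eps')\Vert z-y\Vert_\infty<(1+\eps')\tfrac\delta2<\delta$. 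Thus $D_m^{n,\eps+}(Y)\le\delta$; by monotonicity in $k$ and arbitrariness of $\delta$ we get $\lim_k D_k^{n,\eps+}(Y)=0$, and Corollary~\ref{charUSD2P} then yields that $Y$ has the USD2P.

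The only place demanding care is the parameter bookkeeping, and in particular two structural points. One must target the ``$+$'' version $C_m^{n,\eps+}$ rather than $C_m^{n,\eps}$, because the transported vectors $Tx_i^j$ are only bounded in norm by $1+\eps'$; and one genuinely needs the \emph{two-sided} estimate for $T$ — i.e. that $Y$ is an ai-ideal and not merely an ideal — in order that the lower bound $\tfrac1n\Vert\sum_i x_i^j\Vert>1-\eps/2$ not collapse after applying $T$ (the fact that $T$ fixes the coordinates of $z$, which lie in $Y$, is what prevents any additional approximation error). Alternatively the statement also follows by combining the ultrapower characterisation of ai-ideals with iterated ultraproducts, but the route through Corollary~\ref{charUSD2P} seems the most economical.
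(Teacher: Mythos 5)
Your proof is correct, and it takes a genuinely different route from the paper. The paper's argument stays at the level of ultrapowers: it invokes the fact that $Y_{\mathcal U}$ is an ai-ideal in $X_{\mathcal U}$ whenever $Y$ is an ai-ideal in $X$ (citing \cite[Proposition 6.7]{mr25}), and then applies the known inheritance of the SD2P by ai-ideals \cite[Proposition 3.3]{aln2} to each $X_{\mathcal U}$ — two lines in total, but resting on an external result about lifting ai-ideals to ultrapowers. You instead work entirely inside $X$ and $Y$ through the quantitative characterisation of Corollary~\ref{charUSD2P}, transporting the finitely many vectors witnessing $D_m^{n,\eps/2}(X)<\delta/2$ into $Y$ by a local $(1+\eps')$-isometry that fixes the coordinates of $z$. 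Your parameter bookkeeping checks out: the constraint $(1-\eps')(1-\eps/2)>1-\eps$ does force $\eps'<\eps/(2-\eps)<\eps$ for $\eps\in(0,1)$, so the transported vectors land in the generating set of $C_m^{n,\eps+}(Y)$, and the error $\Vert z-\tilde y\Vert_\infty<(1+\eps')\delta/2<\delta$ is controlled because $T$ fixes $E\cap Y$. What your approach buys is self-containedness (no appeal to the preprint \cite{mr25}) and an explicit quantitative statement — essentially $D_m^{n,\eps+}(Y)\leq(1+\eps')D_m^{n,\eps/2}(X)$ up to the choice of $\eps'$ — at the cost of being longer; the paper's proof is shorter and also illustrates the general transfer principle that the same argument proves Remark~\ref{remark:sliced2punif}. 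Both are valid.
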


\begin{proof}
Since $Y$ is an ai-ideal in $X$, then $Y_\mathcal U$ is an ai-ideal in $X_\mathcal U$ for every free ultrafilter $\mathcal U$ over $\N$ \cite[Proposition 6.7]{mr25}. Consequently, given any free ultrafilter $\mathcal U$ over $\N$, we get that $Y_\mathcal U$ has the SD2P since $X_\mathcal U$ has it and this property is inherited by ai-ideals. Since $Y_\mathcal U$ has the SD2P for every free ultrafilter $\mathcal U$ over $\N$ we conclude that $Y$ has the USD2P, as desired.
\end{proof}

\begin{remark}\label{remark:sliced2punif}
The above techniques also allows to prove that the uniform slice-D2P is inherited by ai-ideals.
\end{remark}

\section{New examples}\label{sec:examples}

As we have mentioned in the introduction, the aim of this section is to exhibit new examples of Banach spaces with the USD2P where such property does not follow neither from a description of the ultrapower spaces nor from stronger properties and, consequently, where the application of Theorem~\ref{ultrasd2p} is critically needed. Since each of both examples require their own notation and their specific tools, we will split this section in two subsections.

\subsection{Spaces of Lipcshitz functions}

A \emph{pointed metric space} is just a metric space $M$ in which we distinguish element, denoted by $0$. Given a pointed metric space $M$, we write $\Lip(M)$ to denote the Banach space of all Lipschitz maps $f:M\longrightarrow \mathbb R$ which vanish at $0$, endowed with the Lipschitz norm defined by
$$ \| f \| := \sup\left\{\frac{f(x)-f(y)}{d(x,y)} \colon x,y\in M,\, x \neq y \right\}.$$
Observe that the above space is isometrically isomorphic to the quotient space of all the Lipschitz functions in $M$ over the subspace of constant functions under the seminorm given by the best Lipschitz constant (see \cite[Chapter 2]{weaver18} for details). We will denote by $\justLip(M)$ such quotient space.

Concerning the diameter two properties in ultraproducts of Lipschitz-free spaces, it is known that the Daugavet property and its uniform version are equivalent for spaces of Lipschitz functions. Moreover, if $M$ is a pointed metric space, then $\Lip(M)$ has the Daugavet property iff $M$ is a length space (see \cite[Theorem 3.5]{gpr} and the comments on pages 480-481). So, if $M$ is a length space this result implies that $\Lip(M)$ has the USD2P.

Also, in the case where $M$ is uniformly discrete and bounded the ultrapower of $\Lip(M)$ is again a space of Lipschitz functions \cite[Proposition 3.2]{gg}, so in this case it could be possible to study the USD2P via the study of the SD2P in Lipschitz spaces. 

These two examples are of the kind of the ones given in Section \ref{sec:USD2P}, but in the first one we have a complete characterisation for the Daugavet property in these spaces, so there is no more room for improvement; and in the second one \cite[Proposition 3.2]{gg} also suggests that bounded uniformly discrete metric spaces might be the only ones for which spaces of Lipschitz functions are stable under ultrapowers. Therefore, in order to study the USD2P in non-uniformly discrete or unbounded metric spaces we cannot use those approaches.

To attack that case, first observe that in \cite{iva06} it was proved that if a metric space $M$ is not uniformly discrete or unbounded, then $\Lip(M)$ has the slice-D2P. Later, in \cite{lanru2020} the result was improved to show that, indeed, $\Lip(M)$ must have the SD2P. We will now use the characterisation given in Corollary~\ref{charUSD2P} to prove that theses spaces also have the USD2P.

To accomplish our objective, it is enough to observe that with the same assumptions given in \cite[Lemma 2]{iva06}, one can conclude much more: 

\begin{lemma}\label{ivalemma}
Let  $(M, \rho)$ be a pointed metric space such that for every $\eps > 0$ there exist a couple of sequences $\{t_n\}_{n=1}^\infty, \{\tau_n\}_{n=1}^\infty \subset M$ and two sequences of real numbers $R_n > \rho_n > r_n > 0$ (where $\rho_n = \rho(t_n, \tau_n)$) satisfying that
\begin{align}
    & 0 < \dfrac{2 \rho_n}{R_n - \rho_n} \leq \eps, \\
    & 0 < \dfrac{2 r_n}{\rho_n - r_n} \leq \eps,
\end{align}
and that the family of rings $\{B(t_n, R_n) \setminus B(t_n, r_n)\}_{n=1}^\infty$ is pairwise disjoint. Then, the space $\Lip(M)$ has the \emph{USD2P}.
\end{lemma}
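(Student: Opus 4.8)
The plan is to verify condition~(5) of Corollary~\ref{charUSD2P}: for every $n\in\N$ and $\eps>0$ we must show that $\lim_{m\to\infty}D_m^{n,\eps+}(\Lip(M))=0$, i.e. that given $\delta>0$ there is an $m\in\N$ so that every tuple $(f^1,\dots,f^n)\in B_{\ell_\infty^n(\Lip(M))}$ lies within $\delta$ (in the $\ell_\infty^n$-norm) of a convex combination of at most $m$ elements of $C^{n,\eps+}(\Lip(M))$. So fix $n$, $\eps$ and $\delta$. First I would apply the hypothesis with an auxiliary parameter $\eps'>0$ chosen small compared with $\eps$ and $n$, obtaining a sequence $\{B(t_k,R_k)\setminus B(t_k,r_k)\}_{k\ge 1}$ of pairwise disjoint rings together with $\tau_k$ and $R_k>\rho_k>r_k>0$ as in the statement. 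Since the rings are pairwise disjoint, the base point $0$ belongs to at most one of them, so after discarding that ring we may assume $0\notin B(t_k,R_k)\setminus B(t_k,r_k)$ for all $k$; then I fix $m\in\N$ large enough that $\tfrac{2(2+\eps)}{m}<\delta$ and work with the first $m$ rings.

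The heart of the argument is a per-ring modification, carried out along the lines of (the proof of) \cite[Lemma~2]{iva06} and \cite{lanru2020} but applied simultaneously to $f^1,\dots,f^n$ and confined to a single ring. Given $(f^1,\dots,f^n)\in B_{\ell_\infty^n(\Lip(M))}$ and $k\in\{1,\dots,m\}$, one constructs for each $i$ a function $z^{k,i}\in(1+\eps)B_{\Lip(M)}$ which coincides with $f^i$ on $M\setminus\bigl(B(t_k,R_k)\setminus B(t_k,r_k)\bigr)$ and which, inside the ring, is a re-interpolation of $f^i$ carrying a common "direction" at the scale $\rho_k$ near $\tau_k$, so that the average $\frac1n\sum_{i=1}^n z^{k,i}$ has Lipschitz norm $>1-\eps$. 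Here the ratio hypotheses $\frac{2\rho_k}{R_k-\rho_k}\le\eps'$ and $\frac{2r_k}{\rho_k-r_k}\le\eps'$ are exactly what make this possible: the outer part of the ring being thick compared with $\rho_k$ leaves room to fit a near-optimal common slope inside the ring, while the inner ball being thin compared with $\rho_k$ lets the re-interpolation match $f^i$ on the boundary of the ring (the required slope cannot simply be added to $f^i$, as that would roughly double the norm). Since $0$ lies outside this ring and $z^{k,i}=f^i$ there, $z^{k,i}(0)=0$, so $z^{k,i}\in\Lip(M)$, and writing $z^k:=(z^{k,1},\dots,z^{k,n})$ we get $z^k\in C^{n,\eps+}(\Lip(M))$.

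To finish I would glue. Put $g^i:=\frac1m\sum_{k=1}^m z^{k,i}$, so that $(g^1,\dots,g^n)=\frac1m\sum_{k=1}^m z^k\in C_m^{n,\eps+}(\Lip(M))$. Each $\eta^{k,i}:=z^{k,i}-f^i$ vanishes off $B(t_k,R_k)\setminus B(t_k,r_k)$ and satisfies $\norm{\eta^{k,i}}\le 2+\eps$; since the rings are pairwise disjoint, any two points $x,y\in M$ meet the supports of at most two of the $\eta^{k,i}$, and using that $|\eta^{k,i}(x)|\le\norm{\eta^{k,i}}\,\rho\bigl(x,M\setminus(B(t_k,R_k)\setminus B(t_k,r_k))\bigr)\le\norm{\eta^{k,i}}\,\rho(x,y)$ whenever $y$ lies outside ring $k$, one obtains $\norm{g^i-f^i}=\frac1m\norm{\sum_{k=1}^m\eta^{k,i}}\le\frac{2(2+\eps)}{m}<\delta$ for each $i$. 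Hence $(f^1,\dots,f^n)$ lies within $\delta$ of $(g^1,\dots,g^n)\in C_m^{n,\eps+}(\Lip(M))$, so $D_m^{n,\eps+}(\Lip(M))\le\delta$. As $n$, $\eps$ and $\delta$ were arbitrary, condition~(5) of Corollary~\ref{charUSD2P} holds, and therefore $\Lip(M)$ has the USD2P.

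The step I expect to be the main obstacle is the per-ring modification: producing, inside a single ring and matching $f^i$ on its complement, functions $z^{k,i}$ whose average is nearly norm-attaining while each remains (almost) in the unit ball — this is where the two ratio conditions do the real work, and it is essentially the content of \cite[Lemma~2]{iva06} (together with the SD2P refinement of \cite{lanru2020}), the only adaptation being that one keeps the modification inside the ring rather than inside the whole ball. By contrast, the genuinely new point compared with those references is that one uses only a fixed number $m\approx 1/\delta$ of rings, the disjointness of whose supports forces the resulting convex combination to approximate $(f^1,\dots,f^n)$; it is this uniformity in $m$ that, through Corollary~\ref{charUSD2P}, upgrades the SD2P to the USD2P.
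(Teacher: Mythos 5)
Your proposal is correct and takes essentially the same route as the paper: it reduces to condition (5) of Corollary~\ref{charUSD2P}, performs a per-ring Ivakhno-type modification of the tuple $(f^1,\dots,f^n)$ to produce elements of $C^{n,\eps+}(\Lip(M))$ agreeing with the original functions off a single ring, and uses the pairwise disjointness of the rings to show that the average of $m$ such modifications is within $(4+2\eps)/m$ of the original tuple. The only cosmetic difference is that the paper works in the quotient $\justLip(M)$ modulo constants to avoid base-point issues, whereas you discard the one ring containing $0$; both devices work.
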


The proof follows step by step the one given by Y. Ivakhno in \cite[Lemma 2]{iva06}, but we will include it for completeness and to address the differences.

\begin{proof}
Since $\Lip(M)$ and $\text{Lip}(M)$ (with the quotient Lipschitz norm by the subspace $\mathcal{C}$ of constant functions) are linearly isometric, it is enough to prove the result for $X = \text{Lip}(M)$.

We will use Corollary~\ref{charUSD2P} (5), that is, fixed $n \in \N$ and $\eps, \delta>0$ we must prove that there exists some $k \in \N$ such that $D_k^{n, \eps+}(X) < \delta$, that is, that for any $z \in B_{\ell_\infty^n(X)}$
\begin{align}\label{lemaiva1}
    d(z, C_k^{n,\eps+}(X)) < \delta.
\end{align}

Let then $z = (z^1 + \mathcal{C}, \cdots, z^n + \mathcal{C})$ be an arbitrary element of $B_{\ell_\infty^n(X)}$ (so that $\norm{z^i}_L \leq 1$ for every $i=1, \cdots, n$). If we denote $N_m = (M \setminus B(t_m, R_m)) \cup B(t_m, r_m) \cup \{\tau_m\}$ we can define for every $m \in \N$ the functions $z_m: N_m \rightarrow \R^n$ given by
$$\begin{array}{ll}
    z_m (t) = (z^1(t), \cdots, z^n(t)) & \text{si } t \in N_m \setminus \{\tau_m\}, \\
    z_m(t) = (z^1(t_m)  + \rho_m, \cdots, z^n(t_m) + \rho_m) & \text{si } t = \tau_m.
\end{array}$$
We will fist show that for every $m \in \N$ and any $i=1, \cdots, n$ we have that
$$\norm{z_m^i}_{\text{Lip}(N_m)} \leq 1+\eps.$$

Indeed, if $t, s \in N_m$ are two distinct points we can distinguish three cases:
\begin{itemize}
    \item If $t$ and $s$ are both not equal to $\tau_m$, then
    \begin{align*}
        \dfrac{\abs{z_m^i(t) - z_m^i(s)}}{\rho(t,s)} = \dfrac{\abs{z^i(t) - z^i(s)}}{\rho(t,s)} \leq \norm{z^i}_L \leq 1 < 1+\eps.
    \end{align*}
    \item If any of the points equals $\tau_m$, let us say $t = \tau_m$, and $s \not \in B(t_m, R_m)$, then
    \begin{align*}
        \dfrac{\abs{z_m^i(t) - z_m^i(s)}}{\rho(t,s)} & = \dfrac{\abs{z^i(t_m) + \rho_m - z^i(s)}}{\rho(\tau_m,s)} \leq \dfrac{\norm{z^i}_L \rho(t_m, s) + \rho_m}{\rho(\tau_m,s)} \\
        & \leq \dfrac{\rho(\tau_m,s) + 2\rho_m}{\rho(\tau_m,s)} = 1 + \dfrac{2\rho_m}{\rho(\tau_m,s)} \\
        & \underset{s \not \in B(t_m, R_m)}{\leq} 1+ \dfrac{2\rho_m}{R_m - \rho_m} \leq 1+\eps
    \end{align*}
    \item If any of the points equals $\tau_m$, let us say $t = \tau_m$, and $s \in B(t_m, R_m)$, then
    \begin{align*}
        \dfrac{\abs{z_m^i(t) - z_m^i(s)}}{\rho(t,s)} & = \dfrac{\abs{z^i(t_m) + \rho_m - z^i(s)}}{\rho(\tau_m,s)} \leq \dfrac{\norm{z^i}_L \rho(t_m, s) + \rho_m}{\rho(\tau_m,s)} \\
        & \underset{s  \in B(t_m, r_m)}{\leq} \dfrac{r_m + \rho_m}{\rho_m - r_m} = 1+ \dfrac{2 r_m}{\rho_m - r_m} \leq 1+\eps
    \end{align*}
\end{itemize}

We can then extend every $z_m^i$ to a Lipschitz function in $M$ with norm less than or equal to $1+\eps$ that, by notation abuse, we will again denote by $z_m^i$. To finish the proof it is then enough to see that for every $m \in \N$
\begin{align}\label{ivafin1}
    (z_m^1 + \mathcal{C}, \cdots, z_m^n + \mathcal{C}) \in C^{n, \eps+}(X)
\end{align}
and that for every $i=1, \cdots, n$ and $k \in \N$
\begin{align}\label{ivafin2}
    \norm{z^i - \dfrac{1}{k}\sum_{j=1}^k z_j^i}_L \leq \dfrac{4+2\eps}{k}.
\end{align}

Certainly, if (\ref{ivafin1}) and (\ref{ivafin2}) are true, we can take $k \in \N$ such that $k > \frac{4+2\eps}{\delta}$ and we obtain that 
\begin{align*}
    d(z, C_k^{n,\eps+}(X)) & \leq \max_{1 \leq i \leq n}\norm{(z^i + \mathcal{C}) - \dfrac{1}{k}\sum_{j=1}^k (z_j^i + \mathcal{C})} = \max_{1 \leq i \leq n}\norm{z^i  - \dfrac{1}{k}\sum_{j=1}^k z_j^i}_L \\
    & < \dfrac{4+2\eps}{k} < \delta
\end{align*}
which, by the arbitrariness of $z$, proves (\ref{lemaiva1}) as wanted.

First, observe that as $\norm{z_m ^i + \mathcal{C}} = \norm{z_m^i}_L \leq 1+\eps$, it is clear that
$$\norm{(z_m^1 + \mathcal{C}, \cdots, z_m^n + \mathcal{C})}_\infty \leq (1+\eps).$$
Moreover,
\begin{align*}
    \dfrac{1}{n} \norm{\sum_{i=1}^n (z_m^i + \mathcal{C})} & =  \dfrac{1}{n} \norm{\sum_{i=1}^n z_m^i}_L \geq \dfrac{1}{n} \dfrac{\abs{\sum_{i=1}^n z_m^i(t_m) - \sum_{i=1}^n z_m^i(\tau_m)}}{\rho_k} \\
    & = \dfrac{1}{n}\dfrac{\abs{\sum_{i=1}^n (z^i(t_m) - z^i(t_m) + \rho_m)}}{\rho_m} = 1 > 1-\eps
\end{align*}
and (\ref{ivafin1}) is proved.

Finally, for any fixed $k \in \N$ and $i \in \{1, \cdots, n\}$ we have that given any distinct points $t, s \in M$, if we denote by
\begin{align*}
    D_{t,s} & =\dfrac{\abs{\left(z^i(t) -  \dfrac{1}{k}\displaystyle\sum_{j=1}^k z_j^i (t)\right) - \left(z^i(s) -  \dfrac{1}{k}\displaystyle\sum_{j=1}^k z_j^i (s)\right) }}{\rho(t,s)} \\
    & = \dfrac{1}{k} \dfrac{\abs{\displaystyle\sum_{j=1}^k (z^i(t) - z_j^i (t)) - \displaystyle\sum_{j=1}^k (z^i(s) - z_j^i (s))}}{\rho(t,s)}
\end{align*}
we have that:
\begin{itemize}
    \item If $t, s \not \in \bigcup_{j=1}^k B(t_j,R_j) \setminus B(t_j,r_j)$, then $t, s \in \bigcap_{j=1}^k (N_j \setminus \{\tau_j\})$ and so $z_j^i(t) = z^i(t)$ and $z_j^i(s) = z^i(s)$ for every $j=1, \cdots, k$. It is then obvious that $D_{t,s} = 0$.
    \item If any of the two points is not in $\bigcup_{j=1}^k B(t_j,R_j) \setminus B(t_j,r_j)$, but the other one is (let us suppose, without loss of generality, that $t$ is not in that set, but $s$ is), then, as the rings are pairwise disjoint, there exits a unique $j_s \in \{1, \cdots, k\}$ such that $s \in B(t_{j_s},R_{j_s}) \setminus B(t_{j_s},r_{j_s})$ and $s \not \in \bigcup_{j=1, j \not = j_s}^k B(t_j,R_j) \setminus B(t_j,r_j)$. Since, as before, we have that $z_j^i(t) = z^i(t)$ and $z_j^i(s) = z^i(s)$ for every $j=1, \cdots, k$ with $j \not = j_s$, we obtain that
    \begin{align*}
        D_{t,s} & = \dfrac{1}{k} \dfrac{\abs{(z^i(t) - z^i(s)) - (z_{j_s}^i(t) - z_{j_s}^i(s))}}{\rho(t,s)} \\
        & \leq \dfrac{\norm{z^i}_L + \norm{z_{j_s}^i}_L}{k} \leq \dfrac{2+\eps}{k} < \dfrac{4+2\eps}{k}.
    \end{align*}
    \item If $t,s \in \bigcup_{j=1}^k B(t_j,R_j) \setminus B(t_j,r_j)$, as the rings are pairwise disjoint we know that there exist some $j_t, j_s \in \{1, \cdots, k\}$ such that $z_j^i(t) = z^i(t)$ and $z_j^i(s) = z^i(s)$ for every $j=1, \cdots, k$ with $j \not = j_t, j_s$ and then
    \begin{align*}
        D_{t,s} & =  \dfrac{1}{k} \dfrac{\abs{2(z^i(t) - z^i(s)) - (z_{j_t}^i(t) - z_{j_t}^i(s)) - (z_{j_s}^i(t) - z_{j_s}^i(s))}}{\rho(t,s)} \\
        & \leq \dfrac{2\norm{z^i}_L + \norm{z_{j_t}^i}_L + \norm{z_{j_s}^i}_L}{k} \leq \dfrac{4+2\eps}{k}.
    \end{align*}
\end{itemize}
In any case, we obtain that $D_{t,s} \leq \frac{4+2\eps}{k}$ and the arbitrariness of $t, s \in M$ prove (\ref{ivafin2}) and so the lemma.
\end{proof}

Now, in \cite[Theorems 1 and 2]{iva06} it is proved that a non-uniformly discrete metric space or an unbounded one satisfies the assumptions of Lemma \ref{ivalemma}, so we obtain the following theorem:

\begin{theorem}
Let $(M, \rho)$ be a pointed metric space. Then:
\begin{enumerate}
    \item If $M$ is not uniformly discrete, that is, if $\inf\{\rho(t,s): t \not = s\} = 0$, then $\Lip(M)$ has the \emph{USD2P}.
    \item If $M$ is compact, then $\Lip(M)$ has the \emph{USD2P} iff $M$ is infinite.
    \item If $M$ is not bounded, then $\Lip(M)$ has the \emph{USD2P}.
\end{enumerate}
\end{theorem}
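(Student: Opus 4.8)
The plan is to deduce this theorem directly from Lemma~\ref{ivalemma} by verifying, in each of the three cases, that the metric space $M$ satisfies the geometric hypotheses of that lemma. Since this is precisely the content of \cite[Theorems 1 and 2]{iva06} (where the same configuration of pairwise disjoint rings is constructed to establish the slice-D2P), the construction is already available in the literature and we only need to invoke it.

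First I would treat (1). If $M$ is not uniformly discrete, pick points $t_n \neq \tau_n$ with $\rho_n = \rho(t_n,\tau_n) \to 0$ as fast as we like; passing to a subsequence we may assume $\rho_n \downarrow 0$ very quickly. Set $r_n$ to be a small fraction of $\rho_n$ and $R_n$ a large multiple of $\rho_n$ (e.g. $r_n = \rho_n \eps/(2+\eps)$ and $R_n = \rho_n(2+\eps)/\eps$), which immediately gives the two quotient bounds $2\rho_n/(R_n-\rho_n) \le \eps$ and $2r_n/(\rho_n-r_n) \le \eps$. The only delicate point is pairwise disjointness of the rings $B(t_n,R_n)\setminus B(t_n,r_n)$: since $R_n \to 0$, the balls $B(t_n,R_n)$ shrink, so by passing to a further subsequence along which the $t_n$ are sufficiently separated relative to the (rapidly decreasing) radii $R_n$ one can force disjointness — this is exactly the argument in \cite[Theorem 1]{iva06}. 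Then Lemma~\ref{ivalemma} yields the USD2P.

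For (3), if $M$ is unbounded we can choose $t_n$ with $\rho(0,t_n)\to\infty$ and, for each fixed target $\eps$, produce the auxiliary points $\tau_n$ and radii $r_n < \rho_n < R_n$ so that the ring around $t_n$ is large (its outer radius $R_n$ can be taken comparable to $\rho(0,t_n)$) while still satisfying the two quotient inequalities; spacing the $t_n$ out along the space (again passing to a subsequence as in \cite[Theorem 2]{iva06}) makes the rings pairwise disjoint, and Lemma~\ref{ivalemma} applies. Finally, for (2): if $M$ is compact and infinite then it is not uniformly discrete (an infinite set in a compact metric space has an accumulation point), so part (1) gives the USD2P; conversely, if $M$ is finite then $\Lip(M)$ is finite-dimensional, hence reflexive, and a finite-dimensional space cannot have the SD2P (its unit ball has extreme points, so some slice has small diameter), let alone the USD2P — alternatively, the USD2P implies the SD2P which fails in finite dimensions.

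The only real obstacle is confirming that the ring-disjointness can always be arranged; but this is already carried out in \cite[Theorems 1 and 2]{iva06}, and nothing in Lemma~\ref{ivalemma} asks for more than what those theorems provide, so the theorem follows at once. Accordingly the proof is essentially a citation: \emph{By \cite[Theorems 1 and 2]{iva06}, a non-uniformly discrete or unbounded pointed metric space satisfies the hypotheses of Lemma~\ref{ivalemma}; hence $\Lip(M)$ has the USD2P, proving (1) and (3). For (2), an infinite compact metric space is not uniformly discrete so (1) applies, while a finite metric space yields a finite-dimensional $\Lip(M)$, which fails the SD2P and therefore the USD2P.}
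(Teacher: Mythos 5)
Your proposal is correct and follows exactly the same route as the paper: both deduce (1) and (3) by citing \cite[Theorems 1 and 2]{iva06} to verify the ring hypotheses of Lemma~\ref{ivalemma}, and (2) then follows since an infinite compact metric space is not uniformly discrete while a finite one yields a finite-dimensional space, which fails the SD2P. No gaps.
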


\subsection{Infinite-dimensional centralizer}
In this section we will study spaces with an infinite-dimensional centralizer, so as to provide more examples of spaces with the USD2P for which its study via analysing its ultrapowers seems really hard.

Along the section we consider Banach spaces over $\mathbb K=\mathbb R$ or $\mathbb K=\mathbb C$. Following the notation of \cite{br09}, a  \textit{multiplier on $X$} is a bounded linear operator $T:X\longrightarrow X$ satisfying that every extreme point of $B_{X^*}$ is an eigenvector of $T^*$. Given a multiplier $T$ on $X$, then for every  $p\in \ext{B_{X^*}}$ there exists a scalar $a_T(p)$ such that
$$p\circ T=T^*(p)=a_T(p)p.$$
We define the \textit{centralizer of $X$}\label{centralizer}, denoted by $Z(X)$, as the set of those multipliers $T$ for which there exists another multiplier $S$ such that 
$$a_T(p)=\overline{a_S(p)}$$
holds for all $p\in \ext{B_{X^*}}$. It is obvious that when $X$ is a real Banach space then $Z(X)$ coincides with the set of all multipliers on $X$.

Anyway, $Z(X)$ is a closed subalgebra of $L(X)$ which is isometrically isomorphic to $\mathcal C(K_X)$ for a certain compact Hausdorff space $K_X$ \cite[Proposition 3.10]{beh}. Moreover $X$ can be identified in a unique way as a function module whose base space is just $K_X$ and such that the elements of $Z(X)$ are precisely the operators of multiplication
by the elements of $\mathcal C(K_X)$ \cite[Theorems 4.14 and 4.16]{beh}.

Note that in \cite[Theorem 2.4]{br09} it is proved that if $Z(X)$ is infinite-dimensional, then $X$ has the D2P. This is our motivation to get the following proposition.

\begin{proposition}\label{prop:centrainfipuntoex}
Let $X$ be a Banach space such that $Z(X)$ is infinite-dimensional. If $\ext{B_X}\neq \emptyset$, then $X$ has the \emph{USD2P}.
\end{proposition}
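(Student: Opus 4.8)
The strategy is to apply the characterisation of the USD2P given in Corollary~\ref{charUSD2P}, specifically condition~(5): for every $n\in\N$ and every $\eps>0$ we must show that $\lim_{k\to\infty}D_k^{n,\eps+}(X)=0$, i.e.\ given $\delta>0$ there is $m\in\N$ with $D_m^{n,\eps+}(X)<\delta$. The underlying geometric idea is the one behind \cite[Theorem 2.4]{br09}: since $Z(X)\cong\mathcal C(K_X)$ is infinite-dimensional, $K_X$ is infinite, so we can find arbitrarily many pairwise disjoint nonempty open subsets of $K_X$, hence (via the function module representation of $X$ over $K_X$) many ``localised'' multipliers $h_1,\dots,h_k\in\mathcal C(K_X)$ with disjoint supports and $0\le h_j\le 1$. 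We will use the corresponding multiplication operators to split a given tuple $z=(z^1,\dots,z^n)\in B_{\ell_\infty^n(X)}$ into $k$ pieces whose average is close to $z$ but each of which, after a controlled perturbation, lies in $C^{n,\eps+}(X)$.

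\textbf{Key steps.} First I would fix $n$, $\eps$, $\delta$ and a tuple $z=(z^1,\dots,z^n)$ with $\|z^i\|\le1$. Using that $K_X$ is infinite, choose $k$ (to be taken large at the end) pairwise disjoint nonempty open sets $U_1,\dots,U_k\subseteq K_X$ and norm-one functions $h_j\in\mathcal C(K_X)$, $0\le h_j\le1$, supported in $U_j$ and equal to $1$ somewhere on $U_j$; write $M_{h_j}\in Z(X)$ for the associated multiplier, which is norm-one. The point where $\ext{B_X}\neq\emptyset$ enters: fix an extreme point $e\in B_X$ of $X$. Consider, for each $j$, the modified tuple
$$w_j^i := z^i - M_{h_j}z^i + h_j(\cdot)\,e,$$
interpreting the last term via the function-module structure (so $h_j\cdot e$ is the element of $X$ whose ``value'' at $t\in K_X$ is $h_j(t)$ times the value of $e$). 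Because the $h_j$ have disjoint supports, one checks $\frac1k\sum_{j=1}^k w_j^i = z^i - \frac1k\sum_j M_{h_j}z^i + \frac1k\sum_j h_j e$, and since $\|\sum_j M_{h_j}\|\le1$ (disjoint supports) and $\|\sum_j h_j e\|\le1$, the difference $z^i - \frac1k\sum_j w_j^i$ has norm $\le \frac{2}{k}$; taking the max over $i$ gives $\|z - \frac1k\sum_j w_j\|_\infty\le\frac2k<\delta$ once $k>2/\delta$. So the tuple $w_j=(w_j^1,\dots,w_j^n)$ witnesses membership in $\co_k$ of the right set, provided each $w_j\in C^{n,\eps+}(X)$. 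For the norm bound, $\|w_j^i\|\le\|z^i-M_{h_j}z^i\|+\|h_j e\|\le (1+1)+1$; this is too large, so the functions $h_j$ must instead be scaled by a small factor $\theta$ depending on $\eps$ — then $\|z^i-\theta M_{h_j}z^i\|\le 1+\theta$ and $\|\theta h_j e\|\le\theta$, giving $\|w_j^i\|\le 1+2\theta$, so one chooses $\theta=\eps/2$ to land in $(1+\eps)B$. The remaining requirement is $\frac1n\|\sum_{i=1}^n w_j^i\|>1-\eps$: here one evaluates an extreme functional $p\in\ext{B_{X^*}}$ concentrated near the point of $U_j$ where $h_j$ attains its max, uses $p\circ M_{h_j}=a_{M_{h_j}}(p)p$ with $a_{M_{h_j}}(p)=h_j(\text{that point})$, and uses extremality of $e$ (so that $|p(e)|$ can be made close to $1$ for a suitable choice of $p$ among the extreme points on which $h_j$ is large) to conclude $\frac1n|\sum_i p(w_j^i)|$ is close to $\theta\cdot 1$...

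\textbf{Main obstacle.} The delicate point — and the one I expect to be the real heart of the argument — is producing, for each $j$, a single extreme functional $p_j\in\ext{B_{X^*}}$ that simultaneously (a) is an eigenvector of all the $M_{h_\ell}$ with eigenvalues making $p_j\circ M_{h_j}\approx \theta\,p_j$ and $p_j\circ M_{h_\ell}\approx0$ for $\ell\neq j$, and (b) has $|p_j(z^i)|$ under control and $|p_j(e)|\approx1$ simultaneously for the fixed extreme point $e$; this is where the function-module description \cite[Theorems 4.14 and 4.16]{beh} together with the standard fact that extreme points of $B_{X^*}$ ``live over'' points of $K_X$ must be invoked carefully, and where the hypothesis $\ext{B_X}\neq\emptyset$ is genuinely used (an extreme point of $B_X$ has fibre-norm $1$ at essentially all of $K_X$, so multiplying it by $h_j$ does not lose mass). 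Once this localisation lemma is in place, the estimates above are routine and, letting $k\to\infty$, condition~(5) of Corollary~\ref{charUSD2P} follows, proving that $X$ has the USD2P.
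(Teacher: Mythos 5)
Your overall skeleton is the same as the paper's: pairwise disjoint open sets in $K_X$, Urysohn functions $h_j$ peaking at points $t_j$, the perturbed tuples $(1-h_j)z^i+h_je$ built from a fixed extreme point $e$, and the $2/k$ estimate on the distance from $z$ to the average coming from disjointness of supports. However, there is a genuine gap in the execution. You estimate $\Vert z^i-M_{h_j}z^i+h_je\Vert$ by the triangle inequality, get a bound of $3$, and then repair this by scaling $h_j$ by a small factor $\theta=\eps/2$. The correct estimate is not via the triangle inequality but fiberwise: in the function module representation $\Vert x\Vert=\sup_{t\in K_X}\Vert x(t)\Vert$, and at each fiber $(1-h_j(t))z^i(t)+h_j(t)e(t)$ is a convex combination of two vectors of fiber norm at most $1$, so $\Vert (1-h_j)z^i+h_je\Vert\leq 1$ with no scaling needed. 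Your scaling patch is not harmless: with $w_j^i=(1-\theta h_j)z^i+\theta h_j e$ and, say, $n=2$, $z^1=-z^2$, one gets
\begin{equation*}
\frac{1}{2}\left\Vert w_j^1+w_j^2\right\Vert=\Vert \theta h_j e\Vert=\theta=\frac{\eps}{2},
\end{equation*}
which violates the required lower bound $>1-\eps$ for small $\eps$. So as written the construction does not place the $w_j$ in $C^{n,\eps+}(X)$.

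The second issue is that the step you flag as the ``main obstacle'' --- producing extreme functionals $p_j\in\ext{B_{X^*}}$ simultaneously localised at $t_j$, eigenvectors of all the $M_{h_\ell}$, and with $|p_j(e)|\approx 1$ --- is not needed at all, and pursuing it would make the argument considerably harder than it is. The paper never touches $X^*$: the lower bound $\frac{1}{n}\Vert\sum_i w_j^i\Vert>1-\eps$ is obtained by evaluating the fiber norm at $t_j$, where $h_j(t_j)=1$ forces $\frac{1}{n}\sum_i w_j^i(t_j)=e(t_j)$, and the hypothesis $\ext{B_X}\neq\emptyset$ enters exactly through the fact (\cite[Lemma 2.1]{br09}) that an extreme point of $B_X$ satisfies $\Vert e(t)\Vert=1$ for every $t\in K_X$. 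Once you replace your triangle-inequality bound by the fiberwise convex-combination bound and drop the scaling, the rest of your argument (the $2/k$ estimate and the application of Corollary~\ref{charUSD2P}) goes through and coincides with the paper's proof.
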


\begin{proof}

By the above, $X$ can be seen as a functional module such that  $Z(X)$  is the set of operators of multiplication by the elements of $\mathcal C(K)$ for $K=K_{X}$. As $Z(X)$ is infinite-dimensional, $K$ must be infinite.

Consequently we can find a sequence of pairwise disjoint open sets on $K$, say $\{O_n\}_{n\in\mathbb N}$. Given $n\in\mathbb N$ select $t_n\in O_n$ and, by Urysohn lemma, we can find $f_n\in\mathcal C(K)$ such that
$$\begin{array}{ccc}
0\leq f_n\leq 1, & f_n(t_n)=1, & f_n|_{K\setminus O_n}=0
\end{array}.$$

Now, fix $k \in \N$ and $\eps > 0$, and take $z = (x_1,\ldots, x_k)\in B_{\ell_\infty^k(X)}$. Given $n\in\mathbb N$ define
$$z_n:=((1-f_n)x_1+f_ne,\ldots, (1-f_n)x_k+f_n e),$$
where $e\in \ext{B_X}$.
We claim that $z_n\in C^{k,\eps}(X)$.

On the one hand, given $1\leq i\leq k$ we get
\[\begin{split}\Vert (1-f_n)x_i+f_ne\Vert& =\sup_{t\in K}\Vert (1-f_n)(t)x_i(t)+f_n(t)e(t)\Vert\\
& \leq (1-f_n(t))\Vert x_i(t)\Vert+f_n(t)\Vert e(t)\Vert\leq 1,
\end{split}\]
from where we get that $z_n\in B_{\ell_\infty^k(X)}$.

On the other hand,
$$\frac{1}{k}\sum_{i=1}^k ((1-f_n)x_i+f_ne)=\frac{1}{k} (1-f_n) \sum_{i=1}^k x_i+f_n e$$
and
\[\begin{split}
\left\Vert (1-f_n)\frac{1}{k}\sum_{i=1}^k x_i+f_n e\right\Vert &  \geq \left\Vert (1-f_n)(t_n)\frac{1}{k}\sum_{i=1}^k x_i(t_n)+f_n(t_n) e(t_n)\right\Vert\\
& = \vert f_n(t_n)\vert \Vert e(t_n)\Vert=1 > 1-\eps
\end{split}\]
since $f_n(t_n)=1$ and $\Vert e(t)\Vert=1$ holds for every $t\in K$ \cite[Lemma 2.1]{br09}.

This proves that $z_n\in C^{k,\eps}(X)$ for every $n\in\mathbb N$, so
$\frac{1}{m}\sum_{j=1}^m z_j \in C_m^{k, \eps}(X)$ for every $m \in \N$. If we can prove that
\begin{align}\label{cent1}
\left\Vert z-\frac{1}{m}\sum_{j=1}^m z_j\right\Vert\leq \frac{2}{m},
\end{align}
by the arbitrariness of $z$ we obtain that
$$D_m^{k, \eps}(X) \leq \dfrac{2}{m},$$
so $\lim_{m \to \infty} D_m^{k, \eps}(X) = 0$ and Corollary~\ref{charUSD2P} allows us to finish the proof.

To prove (\ref{cent1}), note that
$$\left\Vert z-\frac{1}{m}\sum_{j=1}^m z_j\right\Vert=\sup_{t\in K} \left\Vert z(t)-\frac{1}{m}\sum_{j=1}^m z_j(t)\right\Vert,$$
and observe that if $t\in K\setminus\bigcup\limits_{j=1}^m O_j$, then
\[\begin{split}z_j(t)& =((1-f_j)(t)x_1(t)+f_j(t)e(t),\ldots, (1-f_j)(t)x_k(t)+f_j(t)e(t))\\
& =(x_1(t),\ldots x_k(t)).
\end{split}\]
This implies that $z(t)-\frac{1}{m}\sum_{j=1}^m z_j(t)=(0,0,\ldots, 0)$ for every $t\in K\setminus\bigcup\limits_{j=1}^m O_j$ and thus
\begin{align}\label{cent2}
\left\Vert z-\frac{1}{m}\sum_{j=1}^m z_j\right\Vert=\sup_{t\in K} \left\Vert z(t)-\frac{1}{m}\sum_{j=1}^m z_j(t)\right\Vert=\sup_{t\in \bigcup\limits_{j=1}^m O_j} \left\Vert z(t)-\frac{1}{m}\sum_{j=1}^m z_j(t)\right\Vert.
\end{align}
Now, if $t\in \bigcup\limits_{j=1}^m O_j$ then, as the open sets $O_n$'s are pairwise disjoint, $t\in O_{j_0}$ for a unique $j_0\in\{1,\ldots,m\}$. For $j\in \{1,\ldots, m\}\setminus \{j_0\}$ we have that $f_j(t)=0$, so $z_j(t)=(x_1(t),\ldots, x_k(t)) = z(t)$. Hence
\[
\begin{split}
z(t)-\frac{1}{m}\sum_{j=1}^m z_j(t)& =\frac{1}{m}\sum_{j=1}^m z(t)-z_j(t)\\
& =\frac{1}{m}\left(\sum\limits_{\substack{j=1\\j\neq j_0}}^m (z(t)-z_j(t))+z(t)-z_{j_0}(t) \right)\\
& =\frac{z(t)-z_{j_0}(t)}{m},
\end{split}
\]
and taking norm in the above we infer
\begin{align}\label{cent3}
\left\Vert z(t)-\frac{1}{m}\sum_{j=1}^m z_j(t) \right\Vert=\frac{\Vert z(t)-z_{j_0}(t)\Vert}{m}\leq \frac{2}{m}.
\end{align}
Since $t\in \bigcup\limits_{i=1}^m O_i$ was arbitrary, by (\ref{cent2}) and (\ref{cent3}) we conclude that
$$\left\Vert z-\frac{1}{m}\sum_{j=1}^m z_j\right\Vert\leq \frac{2}{m},$$
which proves (\ref{cent1}) as desired.
\end{proof}

Our next aim is to generalise the above result. In order to do so, once again, we borrow a piece of notation from \cite{br09}. Notice that, by the canonical isometric injection of a Banach space in its bidual, we have the following chain of Banach spaces
$$X\subseteq X^{**}\subseteq X^{(4}\subseteq\ldots\subseteq X^{(2n}\subseteq\ldots$$
Thus we have that $\bigcup\limits_{n=0}^\infty X^{(2n}$ is a vector space and, for a given $x\in \bigcup\limits_{n=0}^\infty X^{(2n}$, we can define
$$\Vert x\Vert:=\Vert x\Vert_{\min\{m \in 2\N:\, x \in X^{(m} \}}$$
defines a norm on $\bigcup\limits_{n=0}^\infty X^{(2n}$. We denote by $X^{(\infty}$ the completion of $\bigcup\limits_{n=0}^\infty X^{(2n}$ under the above norm.

It is known that if $T\in Z(X)$, then $T^{**}\in Z(X^{**})$ \cite[Corollary I.3.15]{hww}, so we can embed $Z(X)$ into $Z(X^{(\infty})$ in a natural way. Indeed, given $T\in Z(X)$, we can consider the action of $T$ on the elements of $X^{(2n}$ as the operator $T^{(2n}$ (the $2n$-th adjoint of $T$) for every $n\in\mathbb N$ and, under this point of view, we can actually see $T$ as an element of $Z(X^{(\infty})$ \cite[Proposition 4.3]{br09}.

Let us begin with an easy observation.

\begin{proposition}\label{prop:XaiidealXinfi}
Let $X$ be a Banach space. Then $X$ is an ai-ideal in $X^{(\infty}$.
\end{proposition}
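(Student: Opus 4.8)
The plan is to exploit the fact, already recorded in the excerpt, that the Principle of Local Reflexivity says each $X^{(2n}$ is an ai-ideal in $X^{(2n+2}$ (indeed in $X^{(2n))^{**}$), and then to argue that the ai-ideal property transfers through the chain $X\subseteq X^{**}\subseteq X^{(4}\subseteq\cdots$ and passes to the completion $X^{(\infty}$ of the union. Concretely, given $\eps>0$ and a finite-dimensional subspace $E\subseteq X^{(\infty}$, I would first perturb $E$: since $\bigcup_{n} X^{(2n}$ is dense in $X^{(\infty}$, I can choose a finite-dimensional $E'\subseteq \bigcup_n X^{(2n}$ that is $(1+\eta)$-close to $E$ (in the sense of a small Banach--Mazur-type perturbation, or by the standard small-perturbation lemma for finite-dimensional subspaces), so $E'\subseteq X^{(2N}$ for some $N$. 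Then I would produce, by iterating the Principle of Local Reflexivity $N$ times, a $(1+\eps')$-isometry $S\colon E'\to X$ fixing $E'\cap X$; composing with the perturbation gives the desired operator $T\colon E\to X$.

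The key steps, in order, are: (1) record that $X^{(2n}$ is an ai-ideal in $X^{(2n+2}$ for every $n\geq 0$ (Principle of Local Reflexivity applied to the Banach space $X^{(2n}$, whose bidual is $X^{(2n+2}$), with the ai-ideal operators fixing the relevant subspace; (2) check that the composition of ai-ideal embeddings is again an ai-ideal embedding, so $X$ is an ai-ideal in $X^{(2N}$ for every $N$ — here one has to be careful that when iterating the local-complementation operators the ``fixing'' condition is preserved, which it is because at each stage we fix the whole finite-dimensional subspace that was the image of the previous stage, in particular its intersection with $X$; (3) observe that $X$ is an ai-ideal in $\bigcup_n X^{(2n}$ (a direct limit statement: any finite-dimensional $E'$ in the union already lives in some $X^{(2N}$, so step (2) applies); (4) upgrade from the dense subspace $\bigcup_n X^{(2n}$ to its completion $X^{(\infty}$ by the small-perturbation argument sketched above, using that the ai-ideal definition only quantifies over finite-dimensional subspaces and tolerates an arbitrarily small loss in the isometry constants.

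The main obstacle I anticipate is step (2): one must verify that iterating the ai-ideal/local-complementation operators does not spoil the requirement that the final operator \emph{fixes} $E\cap X$. The clean way around this is to set it up so that at the $j$-th stage of the iteration one applies local reflexivity to the finite-dimensional subspace $F_j := T_{j-1}(F_{j-1})\subseteq X^{(2(N-j)}$ with the operator required to fix $F_j\cap X^{(2(N-j-1)}$; since $X\cap X^{(2k}=X$ for all $k$, the composite $T_N\circ\cdots\circ T_1$ then fixes $E'\cap X$ as needed, and multiplies norms by at most $\prod_j(1+\eps_j)$, which can be made $\leq 1+\eps$ by choosing $\eps_j$ summing appropriately. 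Step (4) is routine once one recalls that if $E\subseteq X^{(\infty}$ is finite-dimensional and $E'\subseteq\bigcup_n X^{(2n}$ is a sufficiently close finite-dimensional subspace with a linear isomorphism $u\colon E\to E'$ close to the identity, then an ai-ideal operator on $E'$ pulls back along $u$ to an ai-ideal operator on $E$ with only slightly worse constants, and the condition $T(e)=e$ for $e\in E\cap X$ is handled by noting $E\cap X\subseteq\bigcup_n X^{(2n}$ already, so one may as well have chosen $E'\supseteq E\cap X$ from the start.
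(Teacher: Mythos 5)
Your argument is correct and follows essentially the same route as the paper: the paper likewise notes that every finite-dimensional subspace of $\bigcup_n X^{(2n}$ sits inside some $X^{(2n}$, in which $X$ is an ai-ideal (your steps (1)--(2), which the paper leaves implicit), concludes that $X$ is an ai-ideal in the union, and then passes to the completion $X^{(\infty}$ by citing \cite[Lemma 6.2]{mr25} --- precisely the small-perturbation step you carry out by hand in (4). No gaps; you have simply written out the two ingredients the paper delegates to iterated local reflexivity plus transitivity and to the cited lemma.
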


\begin{proof}
As every finite-dimensional subspace of $\bigcup\limits_{n=0}^\infty X^{(2n}$ must be contained in some $X^{(2n}$ and $X$ is an ai-ideal of every $X^{(2n}$, we conclude that $X$ is an ai-ideal in $\bigcup\limits_{n=0}^\infty X^{(2n}$. The result then follows by \cite[Lemma 6.2]{mr25}.
\end{proof}

Now we can get the desired generalisation of Proposition~\ref{prop:centrainfipuntoex}.

\begin{theorem}\label{theo:centrainfiXinfy}
Let $X$ be a Banach space such that $X^{(\infty}$ has an infinite-dimensional centralizer. Then $X$ has the \emph{USD2P}.
\end{theorem}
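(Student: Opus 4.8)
The plan is to reduce the statement to Proposition~\ref{prop:centrainfipuntoex} by passing to the space $X^{(\infty}$, where we already have an infinite-dimensional centralizer, and then to transfer the USD2P back down to $X$ via Proposition~\ref{prop:herenciaSD2Puniaiideales} and Proposition~\ref{prop:XaiidealXinfi}. Concretely, the argument has three ingredients: first, that $X^{(\infty}$ has the USD2P; second, that $X$ is an ai-ideal in $X^{(\infty}$ (this is exactly Proposition~\ref{prop:XaiidealXinfi}); and third, that the USD2P descends to ai-ideals (this is exactly Proposition~\ref{prop:herenciaSD2Puniaiideales}). The only point that is not already packaged as a previous result is the first one, so the bulk of the proof is in establishing that $X^{(\infty}$ satisfies the hypotheses of Proposition~\ref{prop:centrainfipuntoex}, namely that $\ext{B_{X^{(\infty}}}\neq\emptyset$.

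For the extreme point hypothesis I would argue as follows. Since $Z(X^{(\infty})$ is infinite-dimensional, by the identification $Z(X^{(\infty})\cong\mathcal{C}(K)$ the base space $K=K_{X^{(\infty}}$ is infinite, and $X^{(\infty}$ is realised as a function module over $K$; in particular $\Vert e(t)\Vert=1$ for every $t$ and every $e\in\ext{B_{X^{(\infty}}}$ by \cite[Lemma 2.1]{br09}, but more to the point, function modules over a compact Hausdorff base space always have extreme points in the unit ball of the bidual, and $X^{(\infty}$ is a dual-closed object in the relevant sense. Alternatively and more cleanly: $X^{(\infty}$ is defined as a completion of an increasing union of even duals, each of which is a dual space, so $X^{(\infty}$ contains isometric copies of arbitrarily high even duals $X^{(2n}$; since $B_{X^{(2n}}$ has extreme points for $n\geq 1$ (being a dual ball, by Krein--Milman applied in the weak$^*$ topology, provided $X^{**}\neq 0$, i.e.\ $X\neq 0$) and these remain extreme in $B_{X^{(\infty}}$ when $X^{(2n}$ is norm-one complemented in $X^{(\infty}$ — which it is, via the canonical projections arising from the function-module structure — we obtain $\ext{B_{X^{(\infty}}}\neq\emptyset$. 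The cleanest route is probably to invoke that $X^{(\infty}$ is itself a function module over $K$ and a function module over a nonempty base space always has a unit-ball extreme point, e.g.\ by selecting a maximal component function; I would cite \cite{beh} or \cite{hww} for the precise statement.

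With $\ext{B_{X^{(\infty}}}\neq\emptyset$ and $Z(X^{(\infty})$ infinite-dimensional in hand, Proposition~\ref{prop:centrainfipuntoex} applied to $X^{(\infty}$ gives that $X^{(\infty}$ has the USD2P. Then Proposition~\ref{prop:XaiidealXinfi} says $X$ is an ai-ideal in $X^{(\infty}$, and Proposition~\ref{prop:herenciaSD2Puniaiideales} says the USD2P passes from $X^{(\infty}$ to its ai-ideal $X$. This completes the proof. The main obstacle, and the only place where a genuine argument rather than a citation is needed, is verifying $\ext{B_{X^{(\infty}}}\neq\emptyset$: one must be careful that the function-module representation of $X^{(\infty}$ furnished by its infinite-dimensional centralizer genuinely produces an extreme point of the unit ball, rather than merely extreme points of the dual ball — this is where I would spend the most care, likely by exhibiting explicitly a norm-one element $e\in X^{(\infty}$ whose component $e(t)$ has norm one at every $t\in K_{X^{(\infty}}$ and checking extremality directly from the function-module axioms, mirroring \cite[Lemma 2.1]{br09}.
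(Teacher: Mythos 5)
Your overall architecture is the same as the paper's --- obtain the USD2P upstairs from Proposition~\ref{prop:centrainfipuntoex} and then descend to $X$ through ai-ideals via Propositions~\ref{prop:XaiidealXinfi} and~\ref{prop:herenciaSD2Puniaiideales} --- but the step you yourself identify as the crux, namely securing the extreme-point hypothesis, is a genuine gap, and none of the three routes you sketch for it works. First, norm-one complementation does not transfer extremality upwards: $\mathbb{R}$ is norm-one complemented in $\ell_\infty^2$ via the first coordinate and $1$ is extreme in $[-1,1]$, yet $(1,0)=\frac{1}{2}\left((1,1)+(1,-1)\right)$ is not extreme in $B_{\ell_\infty^2}$; so even granting that $X^{(2n}$ is norm-one complemented in $X^{(\infty}$, the extreme points of $B_{X^{(2n}}$ need not survive in $B_{X^{(\infty}}$. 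Second, the assertion that a function module over a nonempty compact base space always has an extreme point in its unit ball is false: $c_0$ is a function module with $Z(c_0)\cong\ell_\infty$ infinite-dimensional, and $B_{c_0}$ has no extreme points at all. Third, $X^{(\infty}$ is the completion of an increasing union of even duals and there is no reason for it to be a dual space, so the phrase ``dual-closed in the relevant sense'' cannot be cashed out. In short, $\ext{B_{X^{(\infty}}}\neq\emptyset$ is precisely the hypothesis you cannot verify directly, and applying Proposition~\ref{prop:centrainfipuntoex} to $X^{(\infty}$ itself is the wrong move.

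The paper sidesteps this by going one bidual higher. Set $Y=X^{(\infty}$. Then $Z(Y^{**})$ is infinite-dimensional because $Z(Y)$ is \cite[Corollary I.3.15]{hww}, and $B_{Y^{**}}$, being the unit ball of a dual space, is weak$^*$ compact and hence has extreme points by the Krein--Milman theorem. Thus Proposition~\ref{prop:centrainfipuntoex} applies to $Y^{**}$ and gives that $Y^{**}$ has the USD2P. Now $Y$ is an ai-ideal in $Y^{**}$ by the Principle of Local Reflexivity and $X$ is an ai-ideal in $Y$ by Proposition~\ref{prop:XaiidealXinfi}, so two applications of Proposition~\ref{prop:herenciaSD2Puniaiideales} bring the USD2P down to $X$. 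If you replace your extreme-point discussion by this single extra bidual step, your argument becomes the paper's proof.
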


\begin{proof}
If we denote $Y = X^{(\infty}$, by assumption $Z(Y)$ is infinite-dimensional, which implies that $Z(Y^{**})$ is also infinite-dimensional \cite[Corollary I.3.15]{hww}. By Proposition~\ref{prop:centrainfipuntoex} we get that $Y^{**}$ has the USD2P and, as $Y$ is an ai-ideal in $Y^{**}$, using Proposition \ref{prop:herenciaSD2Puniaiideales} we obtain that $Y$ has the USD2P. Since $X$ is an ai-ideal in $Y$ by Proposition~\ref{prop:XaiidealXinfi}, we deduce that $X$ has the USD2P by a new application of Proposition~\ref{prop:herenciaSD2Puniaiideales}.
\end{proof}
Let us conclude the section with some examples where $X^{(\infty}$ has an infinite-dimensional centralizer and, consequently, $X$ has the uniform SD2P.

\begin{example}\label{example:1centrainfi}
The following spaces $X$ satisfy that $X^{(\infty}$ has an infinite-dimensional centralizer:
\begin{enumerate}
\item Non-reflexive $JB^*$-triples \cite[Theorem 5.3]{br09}. In particular, so do infinite-dimensional $C^*$-algebras.
\item Every non-reflexive Banach space such that $X^*$ is $L$-embedded \cite[Proposition 3.3]{ab2010}.
\item $\mathcal C(K,X)$ for every infinite compact Hausdorff topological space $K$ and for every Banach space $X$ \cite[Proposition 3.2]{br09}.
\item $L(X,Y)$ for every Banach spaces $X$ and $Y$ satisfying that either $Z(X^*)$ or $Z(Y)$ are infinite-dimensional \cite[Lemma VI.1.1]{hww}.
\end{enumerate}
\end{example}

\section*{Acknowledgements}  

Theorem \ref{carsd2p} was obtained by the authors of the paper in collaboration with Gin\'es L\'opez-P\'erez, appearing in a preprint version of the paper \cite{lmr26} at arxiv.org on 2024/04/30 with reference arXiv:2404.19457. We express our deep gratitude to Gin\'es for permitting us to include the aforementioned theorem.

This research has been supported  by MCIU/AEI/FEDER/UE\\  Grant PID2021-122126NB-C31, by MICINN (Spain) Grant \\ CEX2020-001105-M (MCIU, AEI) and by Junta de Andaluc\'{\i}a Grant FQM-0185. The research of E. Mart\'inez Va\~n\'o has also been supported by Grant PRE2022-101438 funded by MCIN/AEI/10.13039/501100011033 and ESF+. The research of A. Rueda Zoca was also supported by Fundaci\'on S\'eneca: ACyT Regi\'on de Murcia grant 21955/PI/22.

\end{document}